\theoremstyle{plain}
\newtheorem{theorem}{Theorem}[section]
\newtheorem{proposition}[theorem]{Proposition}
\newtheorem{lemma}[theorem]{Lemma}
\newtheorem*{main theorem}{Theorem}
\newtheorem{question}[theorem]{Question}
\newtheorem{corollary}[theorem]{Corollary}
\theoremstyle{definition}
\newtheorem*{acknowledgement}{Acknowledgement}
\newtheorem*{organization}{Organization}
\newtheorem*{concluding remark}{Concluding remark}
\newtheorem{definition}[theorem]{Definition}
\newtheorem{remark}[theorem]{Remark}
\newtheorem{notation}[theorem]{Notation}
\newtheorem{example}[theorem]{Example}
\numberwithin{table}{section} \numberwithin{figure}{section}
\numberwithin{equation}{section}
\DeclareMathOperator{\Map}{Map}
\DeclareMathOperator{\sym}{S}
\DeclareMathOperator{\HR}{HR}
\DeclareMathOperator{\supp}{supp \:}
\DeclareMathOperator{\vol}{vol}
\DeclareMathOperator{\Conv}{Conv}
\newcommand{\field}[1]{\mathbb{#1}}
\newcommand{\C}{\field{C}}
\newcommand{\R}{\field{R}}
\newcommand{\Z}{\field{Z}}
\begin{document}

\title[Hard Lefschetz Property for Hamiltonian torus actions]{Hard Lefschetz Property for Hamiltonian torus actions on 6-dimensional GKM manifolds}

\author{Yunhyung Cho}
\address{Department of Mathematics Education, Sungkyunkwan University, 
25-2, Sungkyunkwan-ro, Jongno-gu, Seoul, 03063, Republic of Korea.}
\email{yunhyung@skku.edu}

\author[M. K. Kim]{Min Kyu Kim}
\address{Department of Mathematics Education,
Gyeongin National University of Education, 45 Gyodae-Gil,
Gyeyang-gu, Incheon, 407-753, Republic of Korea}
\email{mkkim@kias.re.kr}

\date{\today}

%

\begin{abstract}
Let
$(M,\omega)$ be a 6-dimensional closed symplectic
manifold with a Hamiltonian $T^2$-action. We show
that if the action is GKM and its GKM graph is
index-increasing, then $(M,\omega)$ satisfies the
hard Lefschetz property.
\end{abstract}

\maketitle
\setcounter{tocdepth}{1}
\tableofcontents

\section{Introduction}
\label{secIntroduction}

Let $T$ be a compact torus acting effectively on a closed symplectic manifold $(M,\omega)$ in a Hamiltonian fashion.
If the $T$-action is GKM, the celebrated theorem \cite[Theorem 1.2.2]{GKM} due to Goresky-Kottwitz-MacPherson
tells us that the equivariant cohomology ring of $M$ is completely determined by the corresponding GKM graph, 
which is an image of zero and one-dimensional torus orbits in $M$ under a moment map. In particular, since the ordinary cohomology ring $H^*(M;\R)$ of $M$ can be obtained 
from its equivariant cohomology ring by extension of scalars, the product structure of $H^*(M;\R)$  is determined 
by the GKM graph. 

In this paper, we study the hard Lefschetz property of a closed Hamiltonian GKM manifold.
We say that a closed symplectic manifold $(M,\omega)$ satisfies the {\em hard Lefschetz property} if 
\begin{displaymath}
\begin{array}{cccc}
\wedge [\omega]^{n-l} : & H^l(M;\R) & \longrightarrow & H^{2n-l}(M;\R)\\[0.5em]
& \alpha & \longmapsto & \alpha \wedge [\omega]^{n-l} \\[0.5em]
\end{array}
\end{displaymath}
is an isomorphism for every $l=0, 1, \cdots, n$. 
It is clear that the product structure of $H^*(M;\R)$ and the cohomology class $[\omega] \in H^2(M;\R)$ determine whether $(M,\omega)$ satisfies the
hard Lefschetz property or not, and therefore it is natural to ask how to check the hard Lefschetz property of a closed Hamiltonian GKM manifold by ``looking up'' the corresponding GKM graph.
 
It is known that the hard Lefschetz property does not hold 
in general. See \cite{Cho1} or \cite{Go} for example. However, it is not known whether $(M,\omega)$ satisfies the hard Lefschetz property when 
$(M,\omega)$ admits a Hamiltonian torus action with isolated fixed points. Our work is motivated by the following question posed by Karshon. 

\begin{question} \label{question_Karshon}\cite{JHKLM}
Let $(M,\omega)$ be a closed symplectic manifold with an effective Hamiltonian circle action. Assume that all fixed
points are isolated. Then, does $(M,\omega)$ satisfy the hard Lefschetz property?
\end{question}

Note that if $(M,\omega)$ satisfies the hard Lefschetz property, then we can easily see that 
the sequence $\{b_0(M), b_2(M), \cdots, b_{2n}(M)\}$ is unimodal\footnote{A sequence of real numbers $a_1, \cdots, a_n$ is called {\em unimodal} if there exists an integer $k\geq 1$ such that $a_1\leq \cdots \leq a_k \geq \cdots \geq a_{n}$} 
where $b_i(M)$ denotes the $i$-th Betti number of $M$. This leads to
the following question, posed by Tolman, regarded as a weak version of Question \ref{question_Karshon}.

\begin{question}\label{question_unimodality}\cite{JHKLM}
    Let $(M,\omega)$ be a closed symplectic manifold with an effective Hamiltonian circle action. If all fixed points are isolated, then 
    is the sequence $\{b_0(M), b_2(M), \cdots, b_{2n}(M)\}$ unimodal?
\end{question}

Following a remark by Karshon in \cite{JHKLM}, we observe that 
the condition of ``admitting isolated fixed points'' is a strong assumption in the sense that 
an example of a closed symplectic non-K\"{a}hler Hamiltonian $S^1$-manifold with isolated fixed points has not been found so far.
In fact, there are several positive results on Question \ref{question_Karshon} and Question \ref{question_unimodality}
for a Hamiltonian torus action with 
isolated fixed points. For example, Delzant \cite{De} proved that every closed symplectic toric manifold
is K\"{a}hler and hence the hard Lefschetz property holds.
Also, Karshon \cite{Ka} proved that any four dimensional closed Hamiltonian
$S^1$-manifold $(M,\omega)$ with isolated fixed points admits an $S^1$-invariant K\"{a}hler form. In this case, the hard Lefschetz property is rather obvious
since $H^1(M ; \R) = H^3(M ; \R) = 0$ by the Frankel's theorem \cite[Corollary 2]{Fr}.
Also, some positive answers {\blue to} Question \ref{question_Karshon} and Question \ref{question_unimodality} are provided
in \cite{Cho2}, \cite{CK1}, \cite{CK2}, and \cite{Lu} under certain technical assumptions. 

Throughout this paper we restrict our attention to Question \ref{question_Karshon} for closed Hamiltonian GKM manifolds.
Note that $(M,\omega)$ satisfies the hard Lefschetz property if and only if the Hodge-Riemann bilinear form defined as
\begin{displaymath}
            \begin{array}{cccc}
                \HR_l : & H^l(M)  \times  H^l(M) & \longrightarrow & \R \\[0.5em]
                 & (\alpha, \beta) & \longmapsto &  <\alpha \beta [\omega]^{n-l}, [M]>\\[0.5em]
            \end{array}
        \end{displaymath}
is non-degenerate for every $l=0,1,\cdots,n$. 
To check the non-degeneracy of $\mathrm{HR}_l$, 
we first consider certain two bases $\mathcal{B}_l^+$ and $\mathcal{B}_l^-$ of $H^l(M;\R)$,  
which consist of so-called the {\em equivariant Thom classes}  in $H^l_T(M;\R)$ introduced by Guillemin-Zara \cite{GZ}. (See also Section \ref{secGraphCohomologyAndEquivariantCohomologyOfHamiltonianGKMManifolds}.)
Then we show that the matrix, denoted by $A_l(M,\omega)$, representing $\mathrm{HR}_l$ with respect to the pair ($\mathcal{B}_l^+, \mathcal{B}_l^-)$ 
is obtained from the GKM graph by using the ABBV-localization theorem and Goldin-Tolman's theorem \cite{GT}.
(See Proposition \ref{proposition_coeff_higher_dim} for the detail.) 
Also, in case of $n-l = 1$, we show that $A_l(M,\omega)$ has many zero entries. 
(See Corollary \ref{corollary_index_differ_by_two}.) Furthermore, we prove the following if $M$ is of dimension six.

\begin{theorem} \label{theorem_main}
Let $(M, \omega)$ be a 6-dimensional closed symplectic manifold
equipped with an effective Hamiltonian $T^2$-action.
If the action is GKM and the corresponding GKM graph is index increasing\footnote{See Definition \ref{definition_index_increasing}.}, then $(M,\omega)$ satisfies
the hard Lefschetz property.
\end{theorem}

\begin{example}\label{example_tolman}
    In \cite{T}, Tolman constructed a six-dimensional closed Hamiltonian GKM manifold $(M,\omega)$ which has no
    K\"{a}hler metric invariant under the action. The corresponding GKM graph is given in Figure \ref{picture: woodward}.

\begin{figure}
\begin{center}
\begin{pspicture}(-1,-1)(5,5.5) \footnotesize
\pspolygon[fillstyle=solid,fillcolor=lightgray,
linewidth=1.5pt](1,0)(0,1)(0,4)(4,0)(1,0)

\psline[linewidth=0.5pt,arrowsize=5pt]{->}(0,-0.5)(0,5)
\psline[linewidth=0.5pt,arrowsize=5pt]{->}(-0.5,0)(5,0)
\psline[arrowsize=5pt]{->}(3,2)(2,3)
\psline[linewidth=1.5pt](1,0)(1,2)(2,1)(4,0)
\psline[linewidth=1.5pt](0,1)(2,1)(1,2)(0,4)

\psdots[dotsize=5pt](1,0)(0,1)(2,1)(1,2)(4,0)(0,4)

\uput[dl](0,0){$O$} \uput[r](5,0){$x$}
\uput[u](0,5){$y$} \uput[d](1,0){$1$}
\uput[l](0,1){$1$} \uput[d](4,0){$4$}
\uput[l](0,4){$4$} \uput[ur](2.5,2.5){$\xi$}

\end{pspicture}
\end{center}
\caption{ \label{picture: woodward} Tolman's
Hamiltonian GKM manifold}
\end{figure}
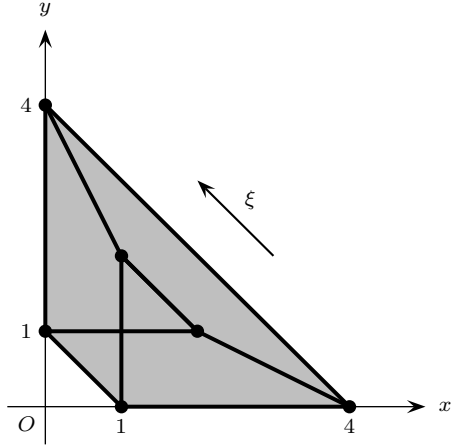
With respect to the $\xi$ described in Figure \ref{picture: woodward}, we see that the GKM graph
 is index-increasing so that Tolman's manifold satisfies the
hard Lefschetz property by Theorem \ref{theorem_main}.
In fact, Woodward already pointed out in \cite[page 9]{Wo2} that Tolman's manifold satisfies the hard Lefschetz property with a hint for a proof, which
seems to rely on computation of the cohomology ring of $M$. Also, he constructed more examples of non-K\"{a}hler GKM-manifolds using $U(2)$-equivariant surgery
and they have the same x-ray with Tolman's example \cite[Proposition 3.6]{Wo},  
and therefore their GKM graphs are all index-increasing. Consequently, every
Woodward's example satisfies the hard Lefschetz property by Theorem \ref{theorem_main}.
\end{example}

\begin{organization}
In Section
\ref{secEquivariantCohomology}, we give a brief
introduction to the equivariant cohomology theory for Hamiltonian
torus actions and recall the ABBV-localization theorem which will be used in order to compute the 
matrix $A_l(M,\omega)$ representing the Hodge-Riemann bilinear form $\mathrm{HR}_l$.
In Section
\ref{secGraphCohomologyAndEquivariantCohomologyOfHamiltonianGKMManifolds},
we provide some background on Hamiltonian GKM manifolds and their graph cohomology rings.
In Section \ref{secHodgeRiemannBilinearFormInHigherDimension}, 
we compute the matrix $A_l(M,\omega)$ by using combinatorial data of a GKM graph.
In Section
\ref{secSixDimensionalHamiltonianGKMManifoldsWithAnIndexIncreasingGraph},
we prove our main theorem (Theorem
\ref{theorem_main}). Finally, in Section
\ref{secProofOfPropositionRefPropositionGraphShapeAndRefPropositionConvexIf8},
we prove two propositions crucially used in Section
\ref{secSixDimensionalHamiltonianGKMManifoldsWithAnIndexIncreasingGraph}.

\end{organization}

\begin{acknowledgement}
The authors thank anonymous referees for their endurance
and kindness to improve the paper, especially to bring the beautiful papers \cite{GT}, \cite{ST}, and \cite{Mo} to our attention.
The first author was supported by the National Research Foundation of Korea(NRF) grant funded by the Korea government(MSIP; Ministry of Science, ICT \& Future Planning) (NRF-2017R1C1B5018168).
The second author is supported by GINUE research fund.
\end{acknowledgement}

\bigskip

\section{Equivariant cohomology}
\label{secEquivariantCohomology}

Throughout this paper, we assume that an action
of a Lie group on a manifold is {\em effective}, unless stated otherwise. Also, we
take cohomology with coefficients in $\R$.

Let $(M,\omega)$ be a closed symplectic manifold admitting Hamiltonian $T$-action where $T$ is a compact $m$-dimensional torus for some integer $m \geq 1$.
Then the {\em equivariant cohomology} of $M$ is defined by
\[
    H^*_T(M) =: H^*(M \times_T ET)
\]
where $ET$ is a contractible space on which $T$ acts freely.
In particular, the equivariant cohomology of a point is given by
$H^*_T(\mathrm{pt}) = H^*(\mathrm{pt} \times_T ET) = H^*(BT)$ where $BT = ET / T$ is the classifying space of $T$.
Note that if $T = S^1$, then $BS^1$ can be constructed as an inductive limit of the sequence of Hopf fibrations
                            \[
                                \begin{array}{ccccccccc}
                                    S^3          & \hookrightarrow & S^5        & \hookrightarrow & \cdots & S^{2n+1} & \cdots & \hookrightarrow & ES^1 \sim S^{\infty} \\
                                    \downarrow   &                 & \downarrow &                 & \cdots & \downarrow & \cdots &                 & \downarrow \\
                                    \C P^1       & \hookrightarrow & \C P^2     &\hookrightarrow  & \cdots & \C P^n      & \cdots & \hookrightarrow &BS^1 \sim \C P^{\infty}
                                \end{array}
                            \]
Thus we have $$H^*(BS^1) \cong \R[x]$$ where $x$ is an
element of degree two such that $\langle x, [\C P^1]
\rangle = 1$. Similarly, if we choose an ordered $\Z$-basis $\frak{X} =
\{X_1, \cdots, X_m\}$ for the lattice\footnote{The {\em lattice} of $\mathfrak{t}$ means the kernel of the exponential map from 
$\frak{t}$ to $T$.} in $\mathfrak{t}$ and a
decomposition $T = S^1 \times \cdots \times S^1$
corresponding to $\frak{X}$, then we can easily check that
$BT$ is homotopy equivalent to the $m$-times product
of $\C P^\infty$ and hence 
\begin{equation}\label{equation_polynomial_ring}
    H^*(BT) \cong \sym(\mathfrak{t}^*) = \R[x_1, \cdots, x_m]
\end{equation}
where $\sym(\mathfrak{t}^*)$ is the symmetric tensor
algebra of $\mathfrak{t}^*$ and each $x_i \in
\mathfrak{t}^*$ is the dual of $X_i$ and is of degree two for $i=1,\cdots,
m$.

\subsection{Equivariant formality} Note that a projection map $M \times ET \rightarrow ET$ on the second factor is $T$-equivariant so that
it induces the map
\[
     \pi : M \times_{T} ET \rightarrow BT
\]
which makes $M \times_T ET$ into an $M$-bundle over $BT$
\begin{equation}\label{diagram : M-bundle over BT}
           \begin{array}{ccc}
            M \times_{T} ET & \stackrel{f} \hookleftarrow & M \\[0.3em]
            \pi \downarrow          &                             &   \\[0.3em]
            BT                   &                             &
          \end{array}
\end{equation}
where $f$ is an inclusion of a fiber $M$. Then it induces the following sequence
$$ H^*(BT) \stackrel{\pi^*} \rightarrow H^*_T(M) \stackrel{f^*} \rightarrow H^*(M). $$
In particular, $H^*_{T}(M)$ has an $H^*(BT)$-module structure via the map $\pi^*$ such that
$$ x \cdot \alpha = \pi^*(x)\cup \alpha $$ for $x \in H^*(BT)$ and $\alpha \in H^*_{T}(M)$.
\begin{definition}
    Let $(M,\omega)$ be a symplectic manifold. We say that a $T$-action on $(M,\omega)$ is \emph{Hamiltonian}
    if there exists a smooth map $\mu : M \rightarrow \mathfrak{t}^*$ such that
    \[
        d \langle \mu, X \rangle = \omega (X, \cdot)
    \]
    for every $X \in \mathfrak{t}$. We call $\mu$ \emph{a moment map} for the $T$-action.
\end{definition}

\begin{remark}
Note that if $\mu$ is a moment map for a Hamiltonian $T$-action on $(M,\omega)$, then $\mu + c$ is also a moment map for
any $c \in \mathfrak{t}^*$. Thus a moment map is not unique.
\end{remark}

The equivariant cohomology of Hamiltonian $T$-action has a remarkable property as follows.

\begin{theorem}\label{theorem_equivariant_formality}\cite{Ki}
Let $(M,\omega)$ be a closed symplectic manifold equipped with a Hamiltonian $T$-action. Then $M$ is equivariantly formal, that is,
$H^*_T(M)$ is a free $H^*(BT)$-module so that $$H^*_{T}(M) \cong H^*(M) \otimes H^*(BT).$$
Equivalently, the map $f^*$ is surjective with the kernel
$\langle x_1, \cdots, x_m \rangle \cdot H^*_{T}(M)$ where
$\langle x_1, \cdots, x_m \rangle$ is an ideal of $H^*(BT)$ generated by degree two elements $x_1, \cdots, x_m$ and
$\cdot$ denotes the scalar multiplication of the $H^*(BT)$-module structure on $H^*_{T}(M)$.
\end{theorem}

\subsection{Localization theorem}
\label{ssecLocalizationTheorem}
For a given $k \in \Z_{\geq 0}$ and an element $\alpha \in H^k_{T}(M)$, Theorem \ref{theorem_equivariant_formality} implies that $\alpha$ can be uniquely expressed as
\[
     \alpha = \alpha_k \otimes 1 + \sum_{i=1}^m \alpha_{k-2}^i \otimes x_i + \sum_{1 \leq i,j \leq m} \alpha_{k-4}^{i,j} \otimes x_ix_j + \cdots
\]
where $\alpha_i^{J} \in H^i(M)$ for every $i \leq k$ and $J$ is a multiset whose elements are in  $[m] = \{1, \cdots, m\}$.
We denote the set of multisets with elements in $[m]$ by $[m]^{\mathrm{mul}}$.
With this notation, we have $f^*(\alpha) = \alpha_k$.

\begin{definition}
An \textit{integration along the fiber $M$} is an $H^*(BT)$-module homomorphism $\int_M : H^*_{T}(M) \rightarrow H^*(BT)$ defined by
\[
    \int_M \alpha = \langle \alpha_k, [M] \rangle \cdot 1 + \sum_{i=1}^m \langle \alpha_{k-2}^i, [M] \rangle \cdot x_i + \cdots
\]
for every $k \in \Z_{\geq 0}$ and any $\alpha \in H^k_{T}(M)$ where $[M]$ is the fundamental homology class of $M$.
\end{definition}

Note that $\langle \alpha_j^{J}, [M] \rangle = 0$ for any $J \subset [m]^{\mathrm{mul}}$ and $j < \dim M = 2n$. 
Also, $\alpha_j^J = 0$ for every $j > 2n$ for a dimensional reason, and therefore we have  
\[
    \int_M \alpha = \sum_{\substack{J \in [m]^{\mathrm{mul}} \\ |J| + 2n = k}} \langle \alpha_{2n}^J,[M] \rangle x^J
\]
for every $k \in \Z_{\geq 0}$ and any $\alpha \in H^k_{T}(M)$ 
where $x^J = \prod_{j \in J} x_j$. This leads to the following corollary.

\begin{corollary}\label{corollary_localization_degree_2n}
Let $\alpha \in H^*_{S^1}(M)$ such that $\deg \alpha \leq \dim M$. Then we have
$$ \int_M \alpha = \langle f^*(\alpha), [M] \rangle.$$
\end{corollary}

Let $M^{T}$ be the fixed point set and let $F \subset M^{T}$ be a fixed component with an inclusion map $i_F : F \hookrightarrow M$. Then it induces a ring homomorphism $$i_F^* : H^*_{T}(M) \rightarrow H^*_{T}(F) \cong H^*(F) \otimes H^*(BT).$$
For any $\alpha \in H^*_{T}(M)$, the image $i_F^*(\alpha)$ is called 
\textit{the restriction of $\alpha$ to $F$} and is denoted by $\alpha|_F$.
The following theorem due to Atiyah-Bott \cite{AB} and Berline-Vergne \cite{BV} states that the integration 
$\int_M \alpha$ can be calculated in terms of the fixed point data. 

\begin{theorem}\label{theorem_localization}(ABBV-localization)
For any $ \alpha \in H^*_{T}(M)$, we have
$$\int_M \alpha = \sum_{F \subset M^{T}} \int_F \frac{\alpha|_F}{\Lambda_F} $$
where $\Lambda_F$ is the equivariant Euler class of the normal bundle of $F$.
In particular, if every fixed point is isolated, then
\[
\int_M \alpha = \sum_{F \in M^T}
\frac{\alpha|_F}{\Lambda_F}.
\]
\end{theorem}

Recall that the $l$-th Hodge-Riemann bilinear form is given by
\begin{displaymath}
            \begin{array}{cccc}
                \HR_l : & H^l(M)  \times  H^l(M) & \longrightarrow & \R \\[0.5em]
                 & (\alpha, \beta) & \longmapsto &  <\alpha \beta {[\omega]}^{n-l}, [M]> \\[0.5em]
            \end{array}
        \end{displaymath}
for $l=0,1,\cdots,n$. Let $\alpha$ and $\beta$ be any elements in $H^l(M)$.
Since $f^*$ is surjective by Theorem \ref{theorem_equivariant_formality}, 
we can find $\widetilde{\alpha},
\widetilde{\beta}, [\widetilde{\omega}] \in H^*_T (M)$
such that $f^*(\widetilde{\alpha}) = \alpha$,
$f^*(\widetilde{\beta}) = \beta$, and
$f^*([\widetilde{\omega}]) = [\omega]$ and hence we get
\[
\int_M \widetilde{\alpha} \widetilde{\beta}
[\widetilde{\omega}]^{n-l} = \langle \alpha \beta
{[\omega]}^{n-l}, [M] \rangle
\]
by Corollary \ref{corollary_localization_degree_2n}.
Thus we can compute $\langle \alpha \beta [\omega]^{n-l}, [M] \rangle$ by applying the ABBV-localization theorem to
$\widetilde{\alpha} \widetilde{\beta} [\widetilde{\omega}]^{n-l}$.

\subsection{Cartan models}
\label{ssecCartanModels} Note that the choice of a class
$[\widetilde{\omega}] \in H^2_T(M)$ satisfying
$f^*([\widetilde{\omega}]) = [\omega]$ is parametrized by 
a moment map. To
understand $[\widetilde{\omega}]$ in more detail, we
briefly overview the Cartan model of $H^*_T(M)$ as
follows. (See also \cite{GS2}.) Let us consider the set
of \emph{equivariant $q$-forms}
\[
    \Omega^q_T(M) = \bigoplus_{2i + j = q} S^i(\mathfrak{t}^*) \otimes \Omega^j(M)^T
\]
where $S^i(\mathfrak{t}^*)$ denotes the set of degree $i$ elements in the symmetric tensor algebra of $\mathfrak{t}^*$ and $\Omega^j(M)^T$ is the set of
$T$-invariant differential $j$-forms on $M$.
Then we may think of an element $\alpha \in \Omega^*_T(M)$ as a map from $\mathfrak{t}$ to $\Omega^*(M)^T$.
We call $(\Omega^*_T, d_T)$ the \emph{Cartan complex} where the differential is defined by 
\[
    d_T := 1 \otimes d + \sum_{j=1}^m x_i \otimes i_{X_i}, \quad d_T(f \otimes \alpha) = f \otimes d\alpha + \sum_{j=1}^m x_i f \otimes i_{X_i}\alpha
\]
for any $f \otimes \alpha \in S^*(\mathfrak{t}^*) \otimes \Omega^*(M)^T$
where $\{X_1, \cdots, X_m\}$ and $\{x_1, \cdots, x_m\}$ are the basis, which we have chosen in \eqref{equation_polynomial_ring},
of $\mathfrak{t}$ and $\mathfrak{t}^*$, respectively.
Then it is not hard to check that $d_T^2 = 0$ by direct computation.
The \emph{equivariant de Rham theorem} states that
\[
    H^*_T(M) \cong H(\Omega^*_T(M), d_T).
\]
Now, let $\mu = (\mu_1, \cdots, \mu_m) : M \rightarrow \mathfrak{t}^*$ be a moment map where $m = \dim T$.
Since each component of $\mu$ is $T$-invariant, we may regard $\mu$ as an element of $S^1(\mathfrak{t}^*) \otimes \Omega^0(M)^T \subset \Omega^2_T(M)$ such that
\[
    \mu = x_1 \otimes \mu_1 + \cdots + x_m \otimes \mu_m.
\]
Since $\omega$ is also $T$-invariant, $\omega$ can be regarded as the element 
$1 \otimes \omega \in S^0(\frak{t}^*) \otimes \Omega^2(M)^T \subset \Omega_T^2(M)$. Define 
\[
    \widetilde{\omega}_\mu := \omega - \mu = 1 \otimes \omega - \sum_{i=1}^m x_i \otimes \mu_i \in \Omega^2_T(M).
\]
We call $\widetilde{\omega}_\mu$ the \emph{equivariant symplectic form with respect to $\mu$}.
Then 
\[
    \begin{array}{ccl}
        d_T(\widetilde{\omega}_\mu) & = & d_T(\omega - \mu) \\
                          & = & 1 \otimes d\omega - \sum_{j=1}^m x_i \otimes d\mu_i + \sum_{j=1}^m x_i \otimes i_{X_i}\omega  \\
                                          & = & \sum_{j=1}^m x_i \otimes (i_{X_i}\omega - d\mu_i)\\
                                          & = & 0
    \end{array}
\]
so that $\widetilde{\omega}_\mu$ is $d_T$-closed, and therefore $\widetilde{\omega}_\mu$ represents an equivariant 
cohomology class $[\widetilde{\omega}_\mu] \in H^2_T(M)$ which we call the 
\emph{equivariant symplectic class with respect to $\mu$}. 
Then we immediately obtain the following corollary from the definition of $\widetilde{\omega}_\mu$.
\begin{lemma}\label{lemma_equivariant_symplectic}
    Let $v \in M^T$ be an isolated fixed point. Then
    \[
        [\widetilde{\omega}_\mu]|_v = -\sum_{j=1}^m x_i \otimes \mu_i(v) = -\mu(v) \in \mathfrak{t}^* = S^1(\mathfrak{t}^*) \cong H^2(BT).
    \]
\end{lemma}

\section{The Graph cohomology of Hamiltonian GKM manifolds}
\label{secGraphCohomologyAndEquivariantCohomologyOfHamiltonianGKMManifolds}

In this section, we briefly review the theory of Hamiltonian GKM-manifolds and GKM graphs, following \cite{GKM} and \cite{GZ}.

\subsection{GKM manifolds}
\label{ssecGKMManifolds}
Let $(M,\omega)$ be a $2n$-dimensional closed symplectic manifold
and let $T$ be an $m$-dimensional torus with its Lie algebra $\mathfrak{t}$ for some integer $m \geq 2$.
Suppose that $T$ acts on $(M,\omega)$ in a Hamiltonian fashion
with a moment map $\mu : M \longrightarrow \mathfrak{t}^*$.

\begin{definition} \label{definition: GKM manifold}
The triple $(M, \omega, \mu)$
is called a {\em Hamiltonian GKM manifold} if
\begin{enumerate}
\item the fixed point set $M^T$ is finite, and
\item for each $v \in M^T,$ the weights
$\alpha_{j, v} \in \mathfrak{t}^*,$ $j=1, \cdots, n,$
of the one-dimensional isotropy $T$-representations on $T_v M$ are
pairwise linearly independent.
\end{enumerate}
\end{definition}

A Hamiltonian GKM manifold $(M, \omega, \mu)$ defines
a graph $\Gamma := \Gamma(M,\omega,\mu)$, called a {\em GKM graph}, 
where the vertex set and the oriented edge set are defined as follows:
\begin{itemize}
\item the vertex set $V_\Gamma$ is equal to $M^T,$
\item the oriented edge set $E_\Gamma$ consists of pairs
$(p,q) \in V_\Gamma \times V_\Gamma$ ($p \neq q$)
such that $p$ and $q$ are in the same component of the $H$-fixed point set $M^H$
for some codimension one subtorus $H$ of $T$.
Equivalently, $(p,q) \in E_\Gamma$ if and only if $p$ and $q$ are
contained in a $T$-invariant two-sphere in $M.$
In particular, $(p,q) \in E_\Gamma$ if and only if $(q,p) \in E_\Gamma$.
\end{itemize}
We call the two conditions (1) and (2) in Definition \ref{definition: GKM manifold} the {\em GKM conditions}.
Note that each $v \in V_\Gamma$
is contained in exactly $n$ edges, i.e., $\Gamma$ is an
$n$-valent graph. Indeed, for each fixed point $v \in M^T$, the tangential $T$-representation on $T_vM$ splits into the sum of one-dimensional irreducible representations so that 
\[
T_vM = \oplus_{j=1}^n \xi_j
\]
where $\xi_j$ is a one-dimensional irreducible $T$-representation with weight $\alpha_{j,v} \in \frak{t}^*$
for $j=1,\cdots, n$. Then any element $z_j \in
\xi_j \subset T_vM$ is fixed by the adjoint action of
$\ker \alpha_{j,v}$, and therefore $\xi_j$ is fixed by the codimension one subtorus 
$H_j := \exp (\ker \alpha_{j,v})$ of $T$. 
By the second GKM condition (2), the connected component of $M^{H_j}$ containing $v$ is of dimension two, i.e., it is a two-sphere
and it contains exactly two fixed points, say $v$ and $v_j'$, of the $T$-action. 
Thus there exist $n$ fixed points $v_1', \cdots, v_n'$ of the $T$-action such that $(v, v_j') \in E_\Gamma$ for each $j=1,\cdots, n$. 
Furthermore, the GKM condition (2) implies that there is no more edge 
containing $v$ except for $(v, v_j')$'s for $j=1,\cdots,n$.


For an oriented edge $e=(p,q) \in E_\Gamma$, we
denote by $i(e)$ and $t(e)$ the initial vertex
$p$ and the terminal vertex $q$ of $e,$
respectively. For each $\xi \in \mathfrak{t}$,
let $\mu_\xi := \langle \mu, \xi \rangle$ where
$\langle~, \rangle$ is the canonical pairing of
$\mathfrak{t}^*$ and $\mathfrak{t}.$ We say $\xi$
is {\em generic} if
\[
\mu_\xi \big( i(e) \big) ~ \ne ~ \mu_\xi \big( t(e)
\big)
\]
for every $e \in E_\Gamma$.
In other words, $\xi$ is generic if $\xi$ is not perpendicular to $\mu(q) - \mu(p)$ for any edge $(p,q)$ of $\Gamma$.

Now, fix a generic $\xi \in \mathfrak{t}$. We say that
$e \in E_\Gamma$ is {\em ascending} (resp. {\em descending}) with respect to $\xi$
if
$
\mu_\xi \big( i(e) \big) < \mu_\xi \big( t(e) \big)
$
(resp. $\mu_\xi \big( i(e) \big)
> \mu_\xi \big( t(e) \big)$). 
The {\em index} of $v \in
V_{\Gamma}$, denoted by $\lambda_v$, is defined as twice the number of
descending edges starting at $v.$

\begin{remark} \label{remark_Ham_circle_action_property}
We can always take a generic element $\xi$ lying on the lattice of $\mathfrak{t}$ so that
$\xi$ generates a circle subgroup $S^1$ of $T$.
Then the Hamiltonian $S^1$-action generated by $\xi$ has a moment map $\mu_{\xi} =
\langle \xi, \mu \rangle$ and the genericity of $\xi$ implies that
the fixed point set $M^{S^1}$ for the $S^1$-action
is the same as $M^T$. Moreover, $\mu_{\xi}$ is a
Morse function on $M$ such that each fixed point $v \in
M^{S^1}$ has a Morse index equal to $\lambda_v.$ See \cite{Au} for more details.
\end{remark}

\begin{definition}\label{definition_index_increasing}
Let $\xi \in \frak{t}$ be a generic vector. 
$\Gamma$ is called {\em index increasing with
respect to $\xi \in \mathfrak{t}$} if
\[
\mu_\xi \big(i(e) \big) < \mu_\xi \big(t(e) \big) 
\quad \text{implies} \quad \quad   ~ ~ \lambda_{i(e)} < \lambda_{t(e)}
\]
for every $e \in E_\Gamma$.
If $\Gamma$ is index increasing with respect to some
$\xi \in \mathfrak{t},$ then $\Gamma$ is simply called
{\em index increasing}.
\end{definition}
\begin{remark}\label{remark_ind_inc_rev_orientation}
We note that if $\Gamma$ is index
increasing with respect to $\xi \in \mathfrak{t},$
then $\Gamma$ is also index increasing with respect to
$-\xi.$
\end{remark}


\subsection{Graph cohomology rings}
For each $e \in E_\Gamma$, we denote by
$S_e^2$ the unique $T$-invariant two-sphere containing $i(e)$
and $t(e)$. 
Let us define a function $\alpha$, called an {\em axial function of $\Gamma$}, which assigns the weight of the one-dimensional tangential $T$-representation on $T_{i(e)} \, S_e^2$ 
for each $e \in E_\Gamma$ :
\begin{flalign*}
& & \alpha : E_\Gamma \longrightarrow \mathfrak{t}^*,
\quad e \longmapsto \alpha(e). & &
\end{flalign*}
\begin{notation}
For the sake of simplicity, we denote by $(p,q)$ the oriented edge $e$ such that $i(e) = p$ and $t(e) = q$. Also, we denote
$\alpha \big((p,q) \big)$ by $\alpha(p,q).$
\end{notation}

\begin{definition} \label{definition: graph
cohomology}
For a given pair $(\Gamma, \alpha)$, the {\em graph cohomology ring} $H(\Gamma, \alpha)$
is defined by
\[
\{ h: V_\Gamma \rightarrow \sym(\mathfrak{t}^*) ~|~ h
\big( t(e) \big) - h \big( i(e) \big) \equiv 0 \mod
\alpha(e) \text{ for every } e \in E_\Gamma \},
\]
where $\sym(\mathfrak{t}^*)$ is identified with a
polynomial ring $\R[x_1,\cdots,x_m]$ as in (\ref{equation_polynomial_ring}).
\end{definition}
The product structure on $H(\Gamma,\alpha)$ is defined by 
\[
    (h_1 \cdot h_2) (v) := h_1(v)h_2(v) \in S(\mathfrak{t}^*) \cong \R[x_1,\cdots,x_m]
\]
for every $h_1, h_2 \in H(\Gamma,\alpha)$.
The graph cohomology ring $H(\Gamma,\alpha)$ has a natural $\Z$-grading given by
\[
H^i (\Gamma, \alpha) := H (\Gamma, \alpha)
\cap \Map (V_\Gamma,
\sym^i(\mathfrak{t}^*))
\]
where $\sym^i(\mathfrak{t}^*)$ is the $\R$-subspace of $\sym(\mathfrak{t}^*)$ generated by $i$-times symmetric tensor products of elements in $\mathfrak{t}^*$
for $i > 0$. When $i = 0$, we put $\sym^0(\mathfrak{t}^*) = \R$.

Together with the product structure, $H(\Gamma,\alpha)$ becomes a commutative $\Z$-graded ring.
Also, any $\sym(\mathfrak{t}^*)$-valued constant function on $V_\Gamma$ is an element of $H(\Gamma, \alpha)$
and hence $\sym(\mathfrak{t}^*)$ is a subring of $H(\Gamma, \alpha)$.
Therefore, $H(\Gamma, \alpha)$ is an $\sym(\mathfrak{t}^*)$-algebra with the unit $1\in \sym^0(\mathfrak{t}^*) = \R$.
\begin{lemma} \label{lemma_zero_at_a_vertex}
Let $\Gamma$ and $\alpha$ be given as above.
\begin{enumerate}
\item Let $h \in H^1 (\Gamma, \alpha)$
and $e \in E_\Gamma$. If $h \big( t(e) \big) = 0$, then $h \big( i(e) \big) = k \cdot \alpha (e)$ for some $k \in \R.$
\item Let $h \in H^i (\Gamma, \alpha)$ for
some $i \le n-1.$ If $h(v)=0$ for every vertex $v$
except one, then $h=0.$
\end{enumerate}
\end{lemma}

\begin{proof}
(1) is straightforward by definition of
$H(\Gamma,\alpha)$.
For (2), assume that $h(v_0) \ne 0$ for some $v_0 \in
V_\Gamma$ and $h(v)=0$ for any other vertex $v \neq v_0$. Then $\alpha(v, v_0)$
divides $h(v_0)$ for every $v$ adjacent to
$v_0$. Also, these $\alpha(v, v_0)$'s are pairwise
linearly independent by the GKM condition (2). Thus $h(v_0)$ should be of
polynomial degree at least $n$ since $\sym (\mathfrak{t}^*)$ is a UFD.
This contradicts that $\deg h(v_0) \le n-1$, and therefore $h(v_0) = 0$. 
\end{proof}

\subsection{Equivariant Thom classes}
Let $\xi \in \mathfrak{t}$ be a generic vector.
A {\em path} of $\Gamma$ is a
sequence of vertices $(v_0, \cdots, v_l)$ of $\Gamma$ such that $(v_j, v_{j+1}) \in E_\Gamma$ for every $j = 0, \cdots, l-1.$
We say that a path $(v_0, \cdots, v_l)$ is {\em ascending} (resp. {\em descending}) with respect to $\xi$ if each 
$(v_j, v_{j+1})$ is ascending (resp. descending) with respect to $\xi$ for every $j$. 

For each $v \in V_\Gamma,$ let $E_v^\uparrow$ (resp. $E_v^\downarrow$)
be the set of ascending (resp. descending) edges with respect to $\xi$ having the initial vertex $v$.
Note that
$|E_v^\downarrow| = \lambda_v/2$ where $\lambda_v$ is the index of $v$ (with respect to $\xi$) and $\lambda_v$ 
is equal to the Morse index of $v$ with respect to $\mu_{\xi},$ see Remark \ref{remark_Ham_circle_action_property}.

For each $h \in H(\Gamma,\alpha)$, define a {\em support} of $h$ by
\[
\supp h := \{v \in V_\Gamma ~|~ h(v) \neq 0 \}.
\]
Guillemin-Zara \cite{GZ} proved that there exists a nice basis of $H(\Gamma, \alpha)$ as an $\sym
(\mathfrak{t}^*)$-module whose elements are called {\em equivariant Thom classes}.

\begin{theorem}\cite[Theorem 1.5, 1.6]{GZ} \label{theorem_Thom_class}
Let $(M,\omega,\mu)$ be a Hamiltonian GKM manifold
with its GKM graph $(\Gamma, V_\Gamma, E_\Gamma).$ If
$\Gamma$ is index increasing with respect to some generic $\xi
\in \mathfrak{t},$ then for
each $v \in V_\Gamma$, there exists a unique element
$\tau_v^+$ of $H^{\lambda_v/2} (\Gamma, \alpha)$ satisfying
\begin{enumerate}
\item $\supp \tau_v^+ \subset \big\{~ v^\prime \in V_\Gamma ~|~
\text{there exists an ascending path with respect to}~\xi~\text{from} ~v ~\text{to}~v^\prime ~ \big\},$ and 
\item $\tau_v^+ (v) = \Lambda_v^+ := \prod_{e \in E_v^\downarrow} ~
\alpha (e).$
\end{enumerate}
Furthermore, the set $\{ \tau_v^+ \}_{v \in V_\Gamma}$ forms a basis of $H(\Gamma, \alpha)$
as an $\sym (\mathfrak{t}^*)$-module.
\end{theorem}

We call $\tau_v^+$ the {\em equivariant Thom class
for $v \in V_\Gamma$ with respect to $\xi$}. 
As in Remark \ref{remark_ind_inc_rev_orientation}, if $\Gamma$ is index
increasing with respect to $\xi,$ then $\Gamma$ is also index increasing with respect to $-\xi.$
We denote by $\tau_v^-$ the equivariant Thom class for $v \in V_\Gamma$ with respect to $-\xi.$
Then by Theorem \ref{theorem_Thom_class}, we have
\[
\tau_v^- (v)= \Lambda_v^- := \prod_{e \in E_v^\uparrow} ~
\alpha (e) \qquad \text{and} \qquad \Lambda_v = \Lambda_v^+
\cdot \Lambda_v^-,
\]
where $\Lambda_v$ is the equivariant Euler class of the
normal bundle to $v$ in $M$.

\subsection{GKM description of equivariant cohomology}
Recall that the inclusion map $\imath : M^T \hookrightarrow M$
induces an $H^* (BT)$-algebra homomorphism
\[
\imath^* : H^*_T (M) \rightarrow H^*_T (M^T)
\cong \bigoplus_{v \in M^T} H^*_T (\{v\}). 
\]
In particular, for each fixed point $v \in M^T$, the inclusion $\imath_v : \{v\} \hookrightarrow M$ induces the map
\[
\imath_v^* : H^*_T (M) \rightarrow H^*_T
(\{v\}) \cong H^*(\{v\})\otimes H^*(BT)
\cong H^*(BT), 
\]
and the image $\imath_v^* (\beta)$  is called the {\em restriction of $\beta$ to $v$} and is denoted by $\beta|_v$ 
for every $\beta \in H^*_T (M)$. See Section \ref{ssecLocalizationTheorem}.
The following theorem is a symplectic
version of the theorem \cite{GKM} due to Goresky,
Kottwitz, and MacPherson, which  enables us to identify $H^*_T(M)$ with $H(\Gamma,\alpha)$.

\begin{theorem}\cite{GKM}\label{theorem_GKM}
Let $(M, \omega, \mu)$ be a closed Hamiltonian GKM manifold
with its GKM graph $(\Gamma, V_{\Gamma}, E_{\Gamma})$. Then the map
\[
H^*_T(M) \longrightarrow H(\Gamma, \alpha), \quad
\beta \longmapsto h_\beta
\]
is an $\sym(\mathfrak{t}^*)$-algebra isomorphism
where
\[
h_\beta(v):=\beta|_v
\]
for each $v \in V_\Gamma$.
The
image of $H_T^{2l} (M)$ under this isomorphism is
$H^l(\Gamma, \alpha)$ for every integer $l \geq 0.$
\end{theorem}

%

Let $e \in E_\Gamma$ be any oriented edge. Let us label the $n$ edges outward from $i(e)$ by 
$e_{1,i(e)}, \cdots, e_{n,i(e)}$. Also, let
\[
\alpha_{j, i(e)} := \alpha(i(e_{j,i(e)}), t(e_{j,i(e)})) = \alpha(i(e), t(e_{j,i(e)})).
\]
Also, we can define $\alpha_{j,t(e)}$'s in a similar way. 

\begin{lemma}\cite[Proposition 2.2]{GZ} \label{lemma_GKM_weight_pair}
For each oriented edge $e \in E_\Gamma,$ we can
reorder $e_{j,i(e)}$'s and $e_{j,t(e)}$'s so that
\begin{equation}
\label{equation: weight reorder} \alpha_{n, t(e)} =
-\alpha_{n, i(e)} = - \alpha(e) \qquad \text{and}
\qquad \alpha_{j, t(e)} \equiv \alpha_{j, i(e)} \mod
\alpha(e)
\end{equation}
for each $1 \le j \le n-1.$
\end{lemma}

\section{Hodge-Riemann bilinear forms}
\label{secHodgeRiemannBilinearFormInHigherDimension}

Let $(M,\omega)$ be a $2n$-dimensional closed symplectic manifold and let $T$ be an $m$-dimensional ($m \ge
2$) compact torus acting on $(M,\omega)$ in a Hamiltonian fashion with a moment map $\mu : M
\rightarrow \mathfrak{t}^*.$ Assume the action is GKM and 
the corresponding GKM graph $\Gamma$ is index increasing with respect to some generic vector $\xi \in \mathfrak{t}^*$
so that the equivariant Thom class exists for every vertex $v \in V_\Gamma$ by Theorem \ref{theorem_Thom_class}.  
In the section, we compute the matrix $A_l(M,\omega)$ presenting the Hodge-Riemann bilinear form $\mathrm{HR}_l$ for each $l = 0, \cdots, n.$

For a fixed $l$ with $0 \leq l \leq n$, let $b_l := b_l(M)$ be the $l$-th Betti number of $M$ and let
\[
\{p_1, \cdots, p_{b_l}\} \qquad \text{and} \qquad
\{q_1, \cdots, q_{b_l}\}
\]
be the set of vertices of index $l$ and index $2n-l$, respectively. Then Theorem \ref{theorem_Thom_class} and 
Theorem \ref{theorem_equivariant_formality} imply that each of
\[
    \mathcal{B}_{l}^+ := \{f^* \tau_{p_1}^+, \cdots, f^*\tau_{p_{b_l}} \} \quad \text{and} \quad \mathcal{B}_{l}^- := \{ f^*\tau_{q_1}^-, \cdots, f^*\tau_{q_{b_l}}^- \}
\]
forms a basis of $H^{l}(M)$ where $f : M
\hookrightarrow M \times_T ET$ is an inclusion of a
fiber $M,$ see (\ref{diagram : M-bundle over BT}).
Then the Hodge-Riemann form $\mathrm{HR}_l$ is
represented by the following $b_l \times b_l$ matrix
\begin{equation}\label{equation_matrix_A}
 A_l (M,\omega) = (a_{jk})_{1 \le j, \,
k \le b_l} := \Big( \HR_l(f^*\tau_{p_k}^+, f^*
\tau_{q_j}^-) \Big)_{1 \le j, \, k \le b_l}.  
\end{equation}
It is straightforward that $(M,\omega)$ satisfies the hard Lefschetz property if and only if $A_l (M,\omega)$ is non-singular for every $l=0,1,\cdots, n$. 

\subsection{$\Theta$ function and $\vol$ function}
\label{ssecVolAndTheta}
To compute each entry $a_{jk}$ of $A_l(M,\omega)$, we define two functions $\Theta$ and $\vol$ as follows. 
The function $\vol$, called the {\em volume function}, is defined by 

\begin{equation*}
\vol : E_\Gamma \longrightarrow \R, \quad (p,
q) \longmapsto \Big( \mu(q)-\mu(p) \Big)
\Big/ \alpha (p, q)
\end{equation*}
for every $(p,q) \in E_\Gamma$. 

\begin{lemma} \label{lemma_symplectic_volume}
For any $(p, q) \in E_\Gamma$, the symplectic area of the $T$-invariant two-sphere $S_{(p, q)}^2$ containing $p$ and $q$ is equal to $\vol(p, q)$. 
In particular, $\vol(p, q)$ is a positive real number.
\end{lemma}

\begin{proof}
Let $i : S_{(p,q)}^2 \hookrightarrow M$ be the
embedding of $S_{(p,q)}^2$ into $M.$ 
Then the symplectic volume $\int_{S_{(p,q)}^2} i^* \omega$ is equal to the integration along the fiber
$\int_{S_{(p,q)}^2} i^* [\widetilde{\omega}_\mu]$ where $[\widetilde{\omega}_\mu] \in H_T^2 (M)$ 
is the equivariant symplectic class with respect to $\mu$.
By the ABBV-localization theorem (Theorem \ref{theorem_localization})
and Lemma \ref{lemma_equivariant_symplectic}, 
we have
\[
\int_{S_{(p,q)}^2} i^* [\widetilde{\omega}_\mu] ~=~
\frac{[\widetilde{\omega}_\mu]|_p}{e_p} +
\frac{[\widetilde{\omega}_\mu]|_q}{e_q} ~=~ \frac {-\mu(p)}
{\alpha(p,q)}+\frac {-\mu(q)} {\alpha(q,p)} ~=~ \frac
{\mu(q) - \mu(p)} {\alpha(p,q)}.
\]
This completes the proof.
\end{proof}

Now, following \cite[p. 453]{GT}, we define the function $\Theta$ by
\[
    \begin{array}{ccccl}
        \Theta & : & E_\Gamma & \rightarrow & Q(\mathfrak{t}^*) \\
                    &    &    (p,q) & \mapsto & \Theta(p,q) := \displaystyle \frac{\rho_{\alpha(p,q)} (\Lambda^+_p)}{\rho_{\alpha(p,q)} (\Lambda^+_{q} / \alpha(q,p))} \\
    \end{array}
\]
where $Q (\mathfrak{t}^*)$ is the quotient field of
$\sym(\mathfrak{t}^*)$ and $\rho_{\alpha(p,q)}$ is the
canonical extension of the projection map 
\[
	X \mapsto X - \frac{\langle X, \xi \rangle }{\langle \alpha(p,q), \xi \rangle }\alpha(p,q) \quad \text{on} ~\mathfrak{t}^*
\]
to $\sym(\mathfrak{t}^*)$. Note that $\Theta(p,q) \in Q(\frak{t}^*)$ is a nonzero element (rational function) in $Q(\frak{t}^*)$ by the GKM conditions. 
Moreover, it has an integer value when $\lambda_q - \lambda_p = 2$. See \cite[Theorem 2.4]{ST}.


\begin{figure}[H]
\begin{center}

\begin{pspicture}(-6,-3)(6,3)\footnotesize

\psline[arrowsize=5pt]{->}(1,-1.5)(1,2)
\psline(0.75,0)(5.5,0)

\psline(1,0)(3.25,1.5)
\psline[arrowsize=5pt]{->}(1,0)(1.25,-1)
\psline[linestyle=dotted](1.25,-1)(4.25,1)
\psline[linestyle=dotted](3,-1)(5.25,0.5)

\psdots[dotsize=4pt](2.75,0)(4.5,0)(1,0)(3.25,1.5)

\psline[arrowsize=6pt,arrowlength=0.6,arrowinset=0.7,
linestyle=none]{<}(2.5,1)(3.25,1.5)
\psline[arrowsize=6pt,arrowlength=0.6,arrowinset=0.7,
linestyle=none]{<}(3.5,0.5)(4.25,1)
\psline[arrowsize=6pt,arrowlength=0.6,arrowinset=0.7,
linestyle=none]{<}(3.75,-0.5)(5.25,0.5)

\psline[arrowsize=5pt]{->}(3.25,1.5)(5.25,0.5)
\psline[arrowsize=5pt]{->}(1,0)(3,-1)

\psline[arrowsize=6pt,arrowlength=0.6,arrowinset=0.7,
linestyle=none]{<}(2.4,-0.7)(3,-1)
\psline[arrowsize=6pt,arrowlength=0.6,arrowinset=0.7,
linestyle=none]{<}(2.5,-0.75)(3,-1)

\psline[arrowsize=6pt,arrowlength=0.6,arrowinset=0.7,
linestyle=none]{<}(4.65,0.8)(5.25,0.5)
\psline[arrowsize=6pt,arrowlength=0.6,arrowinset=0.7,
linestyle=none]{<}(4.75,0.75)(5.25,0.5)

\pscurve[linewidth=0.2pt]{->}(2.75,0)(1.5,-1.5)(0.25,-2)
\pscurve[linewidth=0.2pt]{->}(4.5,0)(4.9,-1)(5,-2)
\pscurve[linewidth=0.2pt](3.25,1.5)(4.05,1.45)(4.25,1)
\pscurve[linewidth=0.2pt](3.25,1.5)(5.05,1.7)(5.25,0.5)

\uput[dl](1, 0){$\mu(p)$} \uput[u](3.25,1.5){$\mu(q)$}
\uput[l](1,2){$\xi$}
\uput[d](0.25,-2){$\rho_{\alpha(p,q)}(\Lambda_p^+)$}
\uput[d](5,-2){$\rho_{\alpha(p,q)}(\Lambda_q^+/\alpha(q,p))$}
\uput[ur](4.05,1.25){$a$} \uput[ur](5.05,1.6){$b$}
\uput[r](5.7,1.2){$\Theta(p,q)=\frac a b$}

\psdots[dotsize=4pt](-6,0)(-3.75,1.5)

\psline(-6,0)(-3.75,1.5)
\psline[arrowsize=5pt]{->}(-6,0)(-5.75,-1)
\psline[arrowsize=5pt]{->}(-3.75,1.5)(-1.75,0.5)
\psline[arrowsize=5pt]{->}(-7,-0.2)(-7,1.3)
\psline[arrowsize=5pt]{->}(-6,0)(-6,1)
\psline[arrowsize=5pt]{->}(-3.75,1.5)(-4.25,2.5)

\uput[dl](-6, 0){$\mu(p)$} \uput[ur](-3.75,1.5){$\mu(q)$}
\uput[l](-7,1.3){$\xi$}
\uput[d](-5.75,-1){$\Lambda_p^+$}
\uput[u](-5.9,1){$\Lambda_p^- / \alpha(p,q)$}
\uput[u](-4.25,2.5){$\Lambda_q^-$}
\uput[d](-1.75,0.5){$\Lambda_q^+/\alpha(q,p)$}
\end{pspicture}

\end{center}
\caption{\label{figure_GT_def} Goldin-Tolman's $\Theta$ function}
\end{figure}
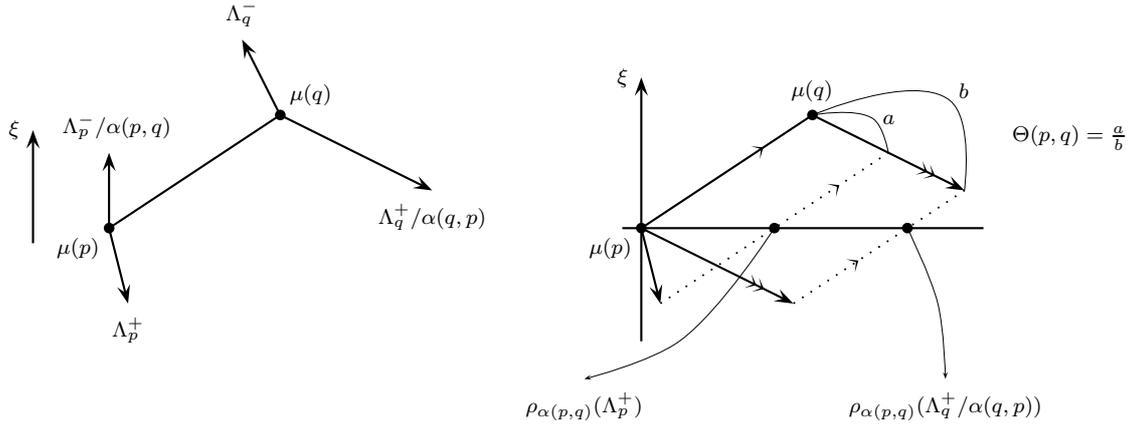

\begin{lemma}\label{lemma_rho_commute}
$\rho_{\alpha(p,q)} = \rho_{\alpha(q,p)}$ for every $(p,q) \in E_\Gamma$.
\end{lemma}

\begin{proof}
	It is straightforward by definition of $\rho$.
\end{proof}

On the other hand, let us consider $\Gamma$ with an opposite generic vector $-\xi \in \mathfrak{t}^*$. 
Then $\Gamma$ is also index-increasing with respect to $-\xi$ and the index of $v$, denoted by $\overline{\lambda}_v$,  with respect to $\mu_{-\xi}$ is given by $\overline{\lambda}_v = 2n - \lambda_v$ for every $v \in V_\Gamma$.
Let $\overline{\Theta}$ be the Goldin-Tolman's $\Theta$ function with respect to $-\xi$ so that
    \[
        \overline{\Theta}(q,p) = \displaystyle \frac{\rho_{\alpha(q,p)} (\Lambda^-_{q})}{\rho_{\alpha(q,p)} (\Lambda^-_{p} / \alpha(p,q))}.
    \]
    By applying Lemma \ref{lemma_rho_commute}, we have
    \begin{equation}\label{equation_Theta_Thetabar}
        \begin{array}{ccl}
        \displaystyle \frac{\Theta (p,q)}{\overline{\Theta} (q,p)} & = & \displaystyle \frac{\rho_{\alpha(q,p)} (\Lambda^-_{p} / \alpha(p,q))}{\rho_{\alpha(q,p)} (\Lambda^-_{q})} \cdot
        \frac{\rho_{\alpha(p,q)} (\Lambda^+_p)}{\rho_{\alpha(p,q)} (\Lambda^+_{q} / \alpha(q,p))} \\[1.3em]
        & = & \displaystyle \frac{\rho_{\alpha(p,q)}(\Lambda_p / \alpha(p,q))}{\rho_{\alpha(p,q)}(\Lambda_{q} / \alpha(q,p))}. \\
        \end{array}
    \end{equation}

\begin{lemma}\label{lemma_super_perfect}
    For any $(p,q) \in E_\Gamma$, we have
    \[
        \displaystyle \frac{\rho_{\alpha(p,q)}(\Lambda_p / \alpha(p,q))}{\rho_{\alpha(p,q)}(\Lambda_{q} / \alpha(q,p))} = 1, 
    \]
    and therefore $\Theta(p,q) = \overline{\Theta}(q,p)$.
\end{lemma}

\begin{proof}
Without loss of generality, we may assume that $(p,q)$ is ascending with respect to $\xi$. By Lemma
\ref{lemma_GKM_weight_pair}, we can give orders on the set of edges
$\{ e_1, \cdots, e_n \}$ having initial vertex $p$ and on $\{e_1',
\cdots, e_n'\}$ having initial vertex $q$ such that
\begin{itemize}
\item $\alpha(e_n) = - \alpha(e_n^\prime) = \alpha(p,q)$, and
\item $\alpha(e_j) = \alpha(e_j^\prime) + c_j \alpha(e_n)
= \alpha(e_j^\prime) + c_j \alpha(p,q)$ for some $c_j \in
\R$
\end{itemize}
for every $j=1, \cdots, n-1$. Then $\alpha(e_1) \cdots
\alpha(e_{n-1}) = \alpha(e_1^\prime) \cdots
\alpha(e_{n-1}^\prime)$ modulo $\alpha(p,q)$ in
$\sym(\mathfrak{t}^*)$. Since $\alpha(p,q)$ is in the kernel of $\rho_{\alpha(p,q)}$, we have
$\rho_{\alpha(p,q)}( \alpha(e_1) \cdots
\alpha(e_{n-1})) = \rho_{\alpha(p,q)}(
\alpha(e_1^\prime) \cdots \alpha(e_{n-1}^\prime))$. Furthermore, the GKM conditions imply that
$\rho_{\alpha(p,q)}(\alpha(e_j)) \neq 0$ and
$\rho_{\alpha(p,q)}(\alpha(e_j^\prime)) \neq 0$ for
every $j=1,\cdots, n-1$. Therefore, 
\[
\begin{array}{ccl}
\rho_{\alpha(p,q)}(\Lambda_p / \alpha(p,q)) & = &
\rho_{\alpha(p,q)}(\alpha(e_1) \cdots
\alpha(e_{n-1})) = \rho_{\alpha(p,q)}(\alpha(e_1^\prime) \cdots \alpha(e_{n-1}^\prime))\\
                                            & =  & \rho_{\alpha(p,q)}(\Lambda_{q} / \alpha(q,p)) \neq 0.\\
\end{array}
\]
This completes the proof.
\end{proof}

\subsection{Computation of $A_l(M,\omega)$}
\label{ssecCoefficientsOfAlMOmega}

Let $\mathbf{v} = (v_0, v_1, \cdots, v_s)$ be an ascending path of $\Gamma$ with respect to a generic $\xi$.
Following \cite{GT}, the {\em length} of $\mathbf{v}$ is defined to be $s$ and denoted by $|\mathbf{v}|$.
For any two vertices $p$ and $q$ in
$V_\Gamma$, let $\sum_p^{q}$ be the set of
ascending paths from $p$ to $q$ :
\[
\Big\{ \, \mathbf{v} = (v_0, v_1, \cdots,
v_{|\mathbf{v}|}) \, \Big| \, v_0= p, \,
v_{|\mathbf{v}|}=q, ~ \lambda_{v_{j+1}} -
\lambda_{v_j} = 2, ~\mathrm{and}~ (v_j, v_{j+1}) \in
E_\Gamma \text{ for any } 0 \le j \le |\mathbf{v}|-1
\, \Big\}
\]
where $|\mathbf{v}| =(\lambda_{q}-\lambda_p)/2.$ Also,
we denote by $\sum_p^{q}(r)$ the subset of $\sum_p^{q}$ consisting of paths passing
through $r$ for $r \in V_\Gamma$.

Now, for $1 \le l \le n$, let $A_l(M,\omega)$ be the $(b_l \times b_l)$-matrix with respect to the bases 
$\mathcal{B}_{l}^+$ and $\mathcal{B}_{l}^-$ defined in \eqref{equation_matrix_A}.
The following proposition states that each entry of $A_l(M,\omega)$ can be computed
by using $\vol$, $\Theta$, and $\mu$.

\begin{proposition} \label{proposition_coeff_higher_dim}
The $(j,k)$-th entry $a_{jk}$ of $A_l (M, \omega)$ is equal to
\begin{equation} \label{equation: coefficient higher dim}
\quad \sum_{r \in V_\Gamma} \quad \Bigg[ ~
\prod_{i=1}^{n-l} \, \big[ \, \mu(r) - d_i \, \big] ~
\Bigg] \cdot \Bigg[ ~ \sum_{\mathbf{v} \in
\sum_{p_k}^{q_j}(r)} ~ \frac{\prod_{i=1}^{n-l} \,
\big[ \, \vol(v_{i-1},v_i) \cdot \Theta (v_{i-1}, v_i)
\, \big]} {\prod_{i \in \{ 0, 1, \cdots, n-l\}
\setminus \{ c_r \}} \, \big[ \, \mu(r)-\mu(v_i) \,
\big]} ~ \Bigg]
\end{equation}
for $1 \le j, \, k \le b_l$ where $d_1, \cdots, d_{n-l}$ are any elements in $\frak{t}^*$ and
$c_r = (\lambda_r-\lambda_{p_k})/2$.
\end{proposition}

To prove Proposition \ref{proposition_coeff_higher_dim}, we use the following theorem due to \cite{GT}. (More general formula can be found in \cite{ST}.)

\begin{theorem}\cite[Theorem 1.6]{GT} \label{theorem_gt}
For any $p, q \in V_\Gamma$, the
following holds:
\[
\tau_p^+ (q) = \Lambda_{q}^+ \cdot
\sum_{\mathbf{v} \in \sum_p^{q}}
\prod_{i=1}^{|\mathbf{v}|}
\frac{\mu(v_i)-\mu(v_{i-1})}
{\mu(q)-\mu(v_{i-1})} \cdot \frac{\Theta
(v_{i-1}, v_i)} {\alpha (v_i, v_{i-1})},
\]
where $\mathbf{v}=(p = v_0, v_1,
\cdots, v_{|\mathbf{v}|}=q).$
\end{theorem}

\begin{remark} \label{remark: reverse notation}
In \cite{GT}, they used the opposite sign convention to ours.
For example, our $\alpha(p,q)$ should be $\alpha(q,p)$ in \cite{GT} and our $\Lambda_p^+$ should be $\Lambda_p^-$ in \cite{GT}. 
Note that the notation $\eta(p,q)$ used in \cite{GT} is the same as $\alpha(q,p)$. Also, $\alpha_p$ (the canonical class) in \cite{GT} 
is the same as $\tau_p^+$ in our paper.
\end{remark}

\begin{proof}[Proof of Proposition \ref{proposition_coeff_higher_dim}]
Let us fix $k$ and $j$ with $1 \leq k, j \leq b_{l}$. By Theorem \ref{theorem_gt}, we have
\[
\tau_{p_k}^+ (r) = \Lambda_r^+ \cdot \sum_{\mathbf{v}
\in \sum_{p_k}^r} \prod_{i=1}^{|\mathbf{v}|} \,
\frac{\mu(v_i)-\mu(v_{i-1})} {\mu(r)-\mu(v_{i-1})}
\cdot \frac{\Theta (v_{i-1}, v_i)} {\alpha (v_i,
v_{i-1})}
\]
for every vertex $r \in V_\Gamma$. Note that the length of $\mathbf{v} \in \Sigma_{p_k}^{r}$ is $\frac{\lambda_r - \lambda_{p_k}}{2}$, which we denote by
$c_r$.
By substituting
\[
\vol (v_{i-1}, v_i) = \displaystyle \frac{\mu(v_i)-\mu(v_{i-1})}{\alpha(v_{i-1}, v_i)}
\]
to the the above formula, we have
\[
\tau_{p_k}^+ (r) = \Lambda_r^+ \cdot \sum_{\mathbf{v} \in
\sum_{p_k}^r} \prod_{i=1}^{|\mathbf{v}|} \,
\frac{\vol(v_{i-1}, v_i) \cdot \Theta (v_{i-1}, v_i)}
{-\mu(r)+\mu(v_{i-1})}.
\]
Similarly, with respect to $-\xi \in \mathfrak{t}$, we have
\[
    \begin{array}{ccl}
        \tau_{q_j}^- (r) & = & \Lambda_r^- \cdot \displaystyle \sum_{\mathbf{v} \in
\sum_r^{q_j}} \, \prod_{i=1}^{|\mathbf{v}|}
\frac{\vol(v_{i-1}, v_i) \cdot \overline{\Theta} (v_i,
v_{i-1})}
{-\mu(r) + \mu(v_{i})}\\
\end{array}
\]
for every $r \in V_\Gamma$ by Lemma \ref{lemma_super_perfect}.
Therefore, we have
\begin{equation} \label{equation: product higher dim}
\tau_{p_k}^+ (r) \cdot \tau_{q_j}^- (r) = \Lambda_r \cdot
\sum_{\mathbf{v} \in \sum_{p_k}^{q_j}(r)} \,
\frac{\prod_{i=1}^{n-l} \, \vol(v_{i-1}, v_i) \cdot
\Theta (v_{i-1}, v_i)} {\prod_{i \in \{ 0, 1, \cdots,
n-l \} \setminus \{ c_r \}} \, \big[ \,
-\mu(r)+\mu(v_i) \, \big]},
\end{equation}
since $|\mathbf{v}| = n-l$ for every $\mathbf{v} \in \Sigma_{p_k}^{q_j}(r)$ and each $\mathbf{v}$ is
of the form
\[
    \mathbf{v} = (v_0=p_k, \cdots, v_{c_r}=r, \cdots,
v_{n-l}=q_j).
\]
Eventually, by applying the ABBV-localization theorem (Theorem \ref{theorem_localization}), we have
\begin{align}
\label{equation: calculation higher dim} a_{jk} &=
\langle  [\omega]^{n-l} \wedge f^* (\tau_{p_k}^+)
\wedge f^* (\tau_{q_j}^-), [M]  \rangle \\
\notag &= \int_M ~ \Big[\prod_{i=1}^{n-l} [\widetilde{\omega}_i]\Big] \cdot
\tau_{p_k}^+ \cdot \tau_{q_j}^-  \\
\notag&= \sum_{r \in V_\Gamma} ~ \Big(
\Big[\prod_{i=1}^{n-l} [\widetilde{\omega}_i]\Big] \cdot \tau_{p_k}^+ \cdot
\tau_{q_j}^- \Big) (r) \Big/ \Lambda_r,
\end{align}
where $\widetilde{\omega}_i$ is any equivariant symplectic form for each $i=1, \cdots, n-l$.
Note that \eqref{equation: calculation higher dim} is equal to
\[
\sum_{r \in V_\Gamma} ~ \Big[\prod_{i=1}^{n-l} [\widetilde{\omega}_i]\Big](r)
\cdot \Bigg[ ~ \sum_{\mathbf{v} \in
\sum_{p_k}^{q_j}(r)} \,  \frac{\prod_{i=1}^{n-l} \,
\vol(v_{i-1}, v_i) \cdot \Theta (v_{i-1}, v_i)}
{\prod_{i \in \{ 0, 1, \cdots, n-l \} \setminus \{ c_r
\}} \, \big[ \, -\mu(r) + \mu(v_i) \, \big]} ~ \Bigg]
\]
by \eqref{equation: product higher dim}. Also, note that
$[\widetilde{\omega}_i](r) = [\widetilde{\omega}_i]|_r = -\mu(r) + d_i$ for some
$d_i \in \mathfrak{t}^*.$ Since \eqref{equation: calculation higher dim} holds for any choice of $\omega_i$, each $d_i$ can be
chosen arbitrarily. Therefore, the coefficient
$a_{jk}$ is equal to
\[
\quad \sum_{r \in V_\Gamma} \quad \Bigg[ ~ \prod_{i=1}^{n-l} \,
\big[ \, \mu(r) - d_i \, \big] ~ \Bigg] \cdot \Bigg[ ~
\sum_{\mathbf{v} \in \sum_{p_k}^{q_j}(r)} ~ \frac{\prod_{i=1}^{n-l}
\, \big[ \, \vol(v_{i-1},v_i) \cdot \Theta (v_{i-1}, v_i) \, \big]}
{\prod_{i \in \{ 0, 1, \cdots, n-l\} \setminus \{ c_r \}} \, \big[
\, \mu(r)-\mu(v_i) \, \big]} ~ \Bigg].
\]
This finishes the proof.
\end{proof}

From Proposition \ref{proposition_coeff_higher_dim}, we can obtain the following.

\begin{corollary}\label{corollary_index_differ_by_two}
    Suppose that $n-l = 1$. Then 
\begin{enumerate}
\item $a_{jk} = \left\{
                \begin{array}{ll}
                \Theta (p_k, q_j) \cdot \vol (p_k, q_j)  & \text{ if }
                (p_k, q_j) \in E_\Gamma, \\
                0   & \text{ if }
                (p_k, q_j) \not \in E_\Gamma,~\text{and}
                          \end{array}
                        \right.$
        \item $a_{jk}$ is nonzero if and only if $(p_k, q_j) \in E_\Gamma$.
\end{enumerate}
\end{corollary}

\begin{proof}
Suppose that $n-l = 1$ and let $p = p_k$ (resp. $q = q_j$) be any index $l$ (resp. index $2n- l$) vertex in $V_\Gamma$. If $p$ and $q$ are
not adjacent, then $a_{jk} = 0$ by Proposition
\ref{proposition_coeff_higher_dim} since $\sum_p^q$
is empty. If $p$ and $q$ are adjacent, the formula of
Proposition \ref{proposition_coeff_higher_dim} is
reduced to
    \[
        \begin{array}{ccl}
        a_{jk} & = & \displaystyle (\mu(p) - d) \cdot \frac{\vol(p,q) \cdot \Theta(p,q)}{\mu(p) - \mu(q)} + (\mu(q) - d) \cdot\frac{\vol(p,q) \cdot \Theta(p,q)}{\mu(q) - \mu(p)}\\
                   & = & \vol(p,q) \cdot \Theta(p,q)
        \end{array}
    \]
for any choice of $d \in \mathfrak{t}^*$. The second statement (2) easily follows from (1) and 
the fact that $\vol(p,q)$ and $\Theta(p,q)$ are both nonzero for every $(p,q) \in E_\Gamma$.
\end{proof}

\subsection{Concluding remark}
\label{ssecConcludingRemark}

Sabatini and Tolman \cite[Theorem 0.3]{ST} gave a generalized formula of Theorem \ref{theorem_gt}.
We state the modified version of the theorem which fits in our context as follows.

\begin{theorem}\label{theorem_ST}\cite{ST}
    Let $(M,\omega,\mu)$ be a Hamiltonian GKM $\, T$-manifold such that
    the corresponding GKM graph $\Gamma$ is index increasing with respect to some generic vector $\xi \in \frak{t}$.
    Let $p$ and $q$ be any two fixed point.
    For each fixed point $z \in M^T$, let $w_z$ be any element in $H^2_T(M)$ such that
    $w_z(q) \neq w_z(z)$. Then
    \[
        \tau^+_p(q) = \Lambda^+_q \cdot \Bigg[ ~ \sum_{\mathbf{v} \in \sum_{p}^{q}} ~ \prod_{i=1}^{|\mathbf{v}|} \frac{w_{v_i}(v_{i+1}) - w_{v_i}(v_i)}
        {w_{v_i}(q) - w_{v_i}(v_i)} \cdot \frac{\tau^+_{v_i}(v_{i+1})}{\Lambda^+_{v_{i+1}}}
         ~ \Bigg].
    \]
\end{theorem}

\begin{remark}
We can easily see that Theorem \ref{theorem_ST} is a generalization of Theorem \ref{theorem_gt}
by taking $w_z = [\widetilde{\omega}_\mu]$ for every $z \in M^T$.
\end{remark}

We expect that Theorem \ref{theorem_ST}, together with the flexibility of the choice of
$d_i$'s in Proposition \ref{proposition_coeff_higher_dim}, may provide a more simple formula of Proposition \ref{proposition_coeff_higher_dim}.
Indeed, the coefficients and the determinant of $A_l(M, \omega)$ can be expressed by very simple formulas
in the following special case \cite{CK2}. More precisely, the authors proved in \cite{CK2} that if there exists a vector $\xi \in \mathfrak{t}$ such that
$\mu_\xi (v) = \lambda_v$ for each fixed point $v,$ i.e., $\mu_\xi$ is a self-indexing moment map,
then $(M,\omega)$ satisfies the hard Lefschetz property by computing the determinant of $A_l(M,\omega)$ for each $l$.

\bigskip

\section{Six-dimensional Hamiltonian GKM manifolds with index increasing graphs}
\label{secSixDimensionalHamiltonianGKMManifoldsWithAnIndexIncreasingGraph}

In this section, we restrict our attention to six-dimensional Hamiltonian GKM manifolds and give the proof of Theorem
\ref{theorem_main}. 

Let $(M,\omega)$ be a six-dimensional closed
symplectic manifold and let $T$ be a two-dimensional compact torus
acting on $(M,\omega)$. Assume that the $T$-action is Hamiltonian
GKM with a moment map $\mu: M \longrightarrow \mathfrak{t}^*$. Let
$\xi \in \mathfrak{t}$ be a generic vector having rational slop such
that the corresponding GKM graph $(\Gamma, V_\Gamma, E_\Gamma)$ is
index increasing with respect to $\xi$. Note that the vector $\xi$
defines a circle subgroup $S^1$ of $T$ and the induced $S^1$-action
on $(M,\omega)$ is Hamiltonian with respect to a moment map 
$\mu_\xi = \langle \mu, \xi \rangle$. 
We start with the following well-known fact.

\begin{lemma}\label{lemma_odd_betti_vanish}\cite{Au}
    $b_{\mathrm{odd}}(M) = 0$.
\end{lemma}
\begin{proof}
    See Remark \ref{remark_Ham_circle_action_property}.
\end{proof}

We reformulate Theorem \ref{theorem_main}
by using equivariant Thom classes defined in Section \ref{secGraphCohomologyAndEquivariantCohomologyOfHamiltonianGKMManifolds}.
Recall that $(M, \omega)$ satisfies
the hard Lefschetz property if and only if the
Hodge-Riemann bilinear form $\HR_l$ is nondegenerate for every
$l=0, \cdots, 3$, see Section \ref{secIntroduction}.
It is straightforward that
\begin{displaymath}
\begin{array}{cccc}
\HR_0 : & H^0(M)  \times  H^0(M) & \longrightarrow &
\R \\ [0.5em] & (\alpha, \beta) & \longmapsto &
<\alpha \beta {[\omega]}^3, [M]>
\end{array}
\end{displaymath}
is nondegenerate since $\omega^3$ is a volume form on
$M$. Therefore, by Lemma \ref{lemma_odd_betti_vanish},
$\HR_2$ is non-degenerate if and only if $(M,\omega)$ satisfies the hard Lefschetz property.

Now, let $\tau^+_v$ and
$\tau^-_v$ be the equivariant Thom classes for each
vertex $v \in V_\Gamma$ with respect to $\xi$ and
$-\xi,$ respectively. Let $b_2 := b_2(M)$ be the
second Betti number of $M$ and let
\[
\{p_1, \cdots, p_{b_2}\} \qquad \text{and}
\qquad \{q_1, \cdots, q_{b_2}\}
\]
be the sets of index-two vertices and index-four
vertices, respectively. These sets have the same
number of elements by the Poincar\'{e} duality. Let
$x_1$ and $x_2$ be the generators of 
$S(\mathfrak{t}^*) \cong H^*(BT) = \R[x_1,x_2]$ given in \eqref{equation_polynomial_ring}. 
By Theorem \ref{theorem_GKM},
we may identify $H^*_T (M)$ with $H(\Gamma, \alpha)$
and the set of all equivariant Thom classes forms a
basis of $H^*_T(M)$ as an $H^* (BT)$-module by Theorem
\ref{theorem_Thom_class}. In particular, each of
\[
\{~ x_1, x_2, \tau^+_{p_k} ~|~ 1 \le k \le
b_2 ~\} \qquad \text{and} \qquad \{ ~ x_1,
x_2, \tau^-_{q_j} ~|~ 1 \le j \le b_2 ~\}
\]
becomes a basis of $H^2_T(M)$ as an $\R$-vector space.

\begin{lemma} \label{lemma_nonsingular}
Let $f : M \hookrightarrow M \times_T ET$ be an
inclusion of a fiber $M$ 
given in \eqref{diagram : M-bundle over BT}
and let $f^* :
H^*_T(M) \rightarrow H^*(M)$ be its induced ring
homomorphism. Then $\HR_2$ is nondegenerate if and only
if the $b_2 \times b_2$ matrix \[ A_2 (M,\omega) = (a_{jk})_{1 \le j, \,
k \le b_2} := \Big( \HR_2(f^*\tau_{p_k}^+, f^*
\tau_{q_j}^-) \Big)_{1 \le j, \, k \le b_2}\] is
nonsingular.
\end{lemma}
\begin{proof}
Recall that $\mathcal{B}_2^+ = \{f^*\tau^+_{p_1}, \cdots, f^*\tau^+_{p_{b_2}}\}$ and 
$\mathcal{B}_2^- = \{f^*\tau^-_{q_1}, \cdots, f^*\tau^-_{q_{b_2}}\}$ are bases of $H^2(M)$
by Theorem \ref{theorem_Thom_class} and Theorem \ref{theorem_equivariant_formality}.  
Then $A_2(M,\omega)$ is the matrix 
representing $\mathrm{HR}_2$ with respect to the pair $(\mathcal{B}_2^+, \mathcal{B}_2^-)$ and this finishes the proof.
\end{proof}

Using  Lemma \ref{lemma_nonsingular}, we can reformulate Theorem \ref{theorem_main} into the following proposition.

\begin{proposition}[Theorem \ref{theorem_main}]\label{proposition_main}
The matrix $A_2 (M,\omega)$ is nonsingular.
\end{proposition}

Let $o$ (resp. $r$) be the unique index-zero (resp. index-six) vertex of $\Gamma$.
Recall that $\vol$ and $\Theta$ are functions on the edge set $E_\Gamma$ defined by 
\[
	\vol(p,q) := \Big( \mu(q)-\mu(p) \Big) \Big/ \alpha (p,q) \, \in \, Q(\mathfrak{t}^*) 
\] and 
\[
	\Theta(p,q) := \displaystyle \frac{\rho_{\alpha(p,q)} (\Lambda^+_p)}{\rho_{\alpha(p,q)} (\Lambda^+_{q} / \alpha(q,p))} \in \, Q(\mathfrak{t}^*) 
\] for $(p,q) \in E_\Gamma$. See Section \ref{ssecVolAndTheta}. 
The following proposition is straightforward by Corollary \ref{corollary_index_differ_by_two}.

\begin{proposition} \label{proposition_matrix_coefficient}
Let $A_2 (M,\omega) = (a_{jk})_{1 \le j,k \le b_2}$ be given in Lemma \ref{lemma_nonsingular}. 
Then 
\begin{enumerate}
\item $a_{jk} = \Theta (p_k, q_j) \cdot \vol (p_k, q_j),$ and 
\item $a_{jk}$ is nonzero if and only if $p_k$ and $q_j$ are adjacent.
\end{enumerate}
\end{proposition}

Suppose that $(p,q) \in E_\Gamma$ where $p$ (resp. $q$) is an index-two (resp. index-four) vertex of $\Gamma$.
Then there exists a unique vertex $v \neq p$ adjacent to and below $q$ with respect to $\xi$,
that is, $\mu_\xi(v) < \mu_\xi (q).$
 Also, by the index increasing property, the index of $v$ is less than or equal to two.  
Since
\begin{equation} \label{equation: supp of Thom
class} \supp \tau_p^+ \subset \{ p \} \cup \{
\text{index-4 vertices adjacent to } p \} \cup \{
\text{the index-6 vertex } r \}
\end{equation}
by Theorem \ref{theorem_Thom_class}, we have $\tau_p^+(v) = 0$, and therefore 
$
\tau_p^+(q) = k \cdot \alpha (q, v) $
for some real number $k \in \R$ by Lemma \ref{lemma_zero_at_a_vertex}.(1). Furthermore, since 
$\tau_p^+(q) - \tau_p^+(p) \equiv 0$ modulo $\alpha(p,q)$, we can easily see that $k = \Theta(p,q)$, 
and therefore 
 \begin{equation} \label{equation: definition of Theta}
\tau_p^+(q) = \Theta(p,q) \cdot \alpha (q, v)
\end{equation}
see Figure \ref{figure_GT_def} and Figure \ref{figure: proof of lemma easy condition}.(a).
Note that if $p$ and $q$ are not adjacent, then $\tau_p^+(q) = 0$ by \eqref{equation: supp of Thom class}. 
Thus we obtain the following lemma. See also \cite[Theorem 4.1]{GT}.

\begin{lemma}\label{lemma_condition_for_nonzero}
Let $p$ and $q$ be an index-two and index-four vertices, respectively. Then $p$ and $q$ are adjacent if and only if $\tau^+_p(q) \neq 0$.
\end{lemma}

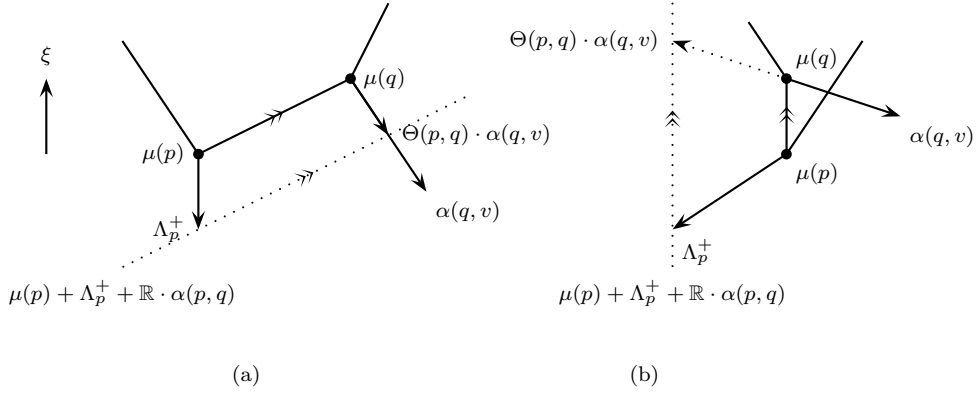
\begin{figure}[ht]
\begin{center}
\mbox{\subfigure[ ]{
\begin{pspicture}(-1.5,-2.5)(3,2) \footnotesize
\psline(-1,1.5)(0,0)(2,1)(2.5,2)
\psline[arrowsize=5pt]{->}(0,0)(0,-1)
\psline[arrowsize=5pt]{->}(2,1)(2.5,0.25)
\psline[arrowsize=5pt]{->}(2,1)(3,-0.5)
\psdots[dotsize=4pt](0,0)(2,1)
\psline[linestyle=dotted](-1,-1.5)(3.5,0.75)

\psline[arrowsize=5pt]{->}(-2,0)(-2,1)
\uput[u](-2,1){$\xi$}

\psline[arrowsize=6pt,arrowlength=0.6,arrowinset=0.7,
linestyle=none]{<}(0.9,0.45)(2,1)
\psline[arrowsize=6pt,arrowlength=0.6,arrowinset=0.7,
linestyle=none]{<}(1,0.5)(2,1)

\psline[arrowsize=6pt,arrowlength=0.6,arrowinset=0.7,
linestyle=none]{<}(1.3,-0.35)(2.5,0.25)
\psline[arrowsize=6pt,arrowlength=0.6,arrowinset=0.7,
linestyle=none]{<}(1.4,-0.3)(2.5,0.25)

\uput[l](0,0){$\mu(p)$} \uput[r](2,1){$\mu(q)$}
\uput[l](0,-1){$\Lambda_p^+$} \uput[dr](3,-0.5){$\alpha
(q, v)$} \uput[d](-1,-1.5){$\mu(p) + \Lambda_p^+ + \R
\cdot \alpha (p, q)$} \uput[r](2.5,0.25){$\Theta (p,
q) \cdot \alpha (q, v)$
}
\end{pspicture}}

\subfigure[ ]{
\begin{pspicture}(-3,-2)(2.5,2.5) \footnotesize
\psline(2.5,2)(1.5,0.5)(1.5,1.5)(1,2.25)
\psline[arrowsize=5pt]{->}(1.5,0.5)(0,-0.5)
\psline[arrowsize=5pt]{->}(1.5,1.5)(3,1)
\psline[linestyle=dotted,
arrowsize=5pt]{->}(1.5,1.5)(0,2)
\psdots[dotsize=4pt](1.5,0.5)(1.5,1.5)
\psline[linestyle=dotted](0,-1)(0,2.5)

\psline[arrowsize=6pt,arrowlength=0.6,arrowinset=0.7,
linestyle=none]{<}(0,0.85)(0,2.5)
\psline[arrowsize=6pt,arrowlength=0.6,arrowinset=0.7,
linestyle=none]{<}(0,0.95)(0,2.5)

\psline[arrowsize=6pt,arrowlength=0.6,arrowinset=0.7,
linestyle=none]{<}(1.5,0.9)(1.5,1.5)
\psline[arrowsize=6pt,arrowlength=0.6,arrowinset=0.7,
linestyle=none]{<}(1.5,1.0)(1.5,1.5)

\uput[dr](1.5,0.5){$\mu(p)$}
\uput[ur](1.5,1.5){$\mu(q)$}
\uput[dr](0,-0.5){$\Lambda_p^+$} \uput[dr](3,1){$\alpha
(q, v)$} \uput[d](0,-1){$\mu(p) + \Lambda_p^+ + \R \cdot
\alpha (p, q)$} \uput[l](0,2){$\Theta (p, q) \cdot
\alpha (q, v)$}
\end{pspicture}}
}
\end{center}
\caption{ \label{figure: proof of lemma easy
condition} (a) $\Theta(p,q) > 0$, (b) $\Theta(p,q) < 0$}
\end{figure}

\subsection{Positivity of $\Theta$}
The positivity of $\Theta (p, q)$ will play an essential role
for proving the non-singularity of $A_2(M,\omega)$.

\begin{definition} \label{definition: same side}
A subset of a real two-dimensional vector space $V$ is said to be
{\em in the same side with respect to a straight
line $L$} in $V$ if it is contained in the closure of a
connected component of $V-L.$
\end{definition}

Let $(p,q) \in E_\Gamma$ for an index-two vertex $p$ and an index-four
vertex $q$, respectively. By definition of graph cohomology, we have
\begin{equation} \label{equation: graph cohomology
element} \tau_p^+(p) \equiv \tau_p^+(q) \mod \alpha
(p, q).
\end{equation}
Substituting
\[
\tau_p^+(p) = \Lambda_p^+ \quad \text{ and } \quad
\tau_p^+(q) = \Theta (p, q) \cdot \alpha (q, v)
\]
into (\ref{equation: graph cohomology element}), we have
\begin{equation} \label{equation: Theta is
positive} -\Lambda_p^+ + \Theta (p, q) \cdot \alpha (q,v)
= k \cdot \alpha (p, q) \quad \text{for some real
number } k.
\end{equation}
Adding $\mu(q) - \mu(p) = \vol (p, q) \cdot \alpha
(p,q)$ to both sides of \eqref{equation: Theta is positive},
we have
\[
\Big( -\mu(p) - \Lambda_p^+ \Big) + \Big(\mu(q) + \Theta
(p, q) \cdot \alpha (q,v) \Big) = k^\prime \cdot
\alpha (p, q) \quad \text{where} ~k^\prime = k + \vol
(p, q).
\]
This implies that
\[
\mu(q) + \Theta
(p, q) \cdot \alpha (q, v)  \in  \Big(\mu(p) + \Lambda_p^+ \Big)  + \R \cdot
\alpha (p, q),
\]
that is, $\mu(q) + \Theta (p, q) \cdot \alpha (q, v)$ is contained in the dotted line in Figure \ref{figure: proof of lemma easy condition}. 

On the other hand, $\Theta$ can be understood in the following way :
the straight line $\mu(q)
+ \R \cdot \alpha (q, v)$ intersects $\mu(p) + \Lambda_p^+
+ \R \cdot \alpha (p, q)$ at $\mu(q) + \Theta (p, q)
\cdot \alpha (q, v),$ see Figure \ref{figure: proof of
lemma easy condition} in which the line segment
connecting $\mu(p)$ and $\mu(q)$ is parallel with the dotted straight
line marked by the doubled arrow vector.
Consequently, we can see that
$\mu(p) + \Lambda_p^+$ and $\mu(q) + \Theta (p, q) \cdot
\alpha (q, v)$ are in the same side with respect to
the straight line $\mu(p) + \R \cdot \alpha (p, q).$
Equivalently, $\Lambda_p^+$ and $\Theta (p, q) \cdot \alpha (q, v)$ are in the same side with respect to
the straight line $\R \cdot \alpha (p, q).$ 
From this observation, we deduce the following. 
\begin{lemma}\label{lemma_condition_for_positive} Two vectors
$\Lambda_p^+$ and $\alpha (q, v)$ are in the same side
with respect to the straight line $\R \cdot \alpha (p,
q)$ if and only if $\Theta (p, q)$ is positive.
\end{lemma}

As the following examples show, 
Lemma \ref{lemma_condition_for_positive} enables us to check the positivity of $\Theta$ by looking up the shape of a graph.

\begin{example} \label{example: negative Theta}
For example, $\Theta (p, q)$ is positive in Figure \ref{figure: proof of lemma easy condition}.(a).
On the other hand, in Figure \ref{figure: proof of lemma easy condition}.(b), $\Theta (p, q)$ is negative since two vectors $\Lambda_p^+$ and
$\alpha (q, v)$ are not in the same side with respect
to the straight line $\R \cdot \alpha (p, q).$ 

More concrete examples are as
follows.
In Figure \ref{figure: image of moment}.(d),
$\Theta(p, q)$ is negative for the index-two vertex
$p$ and the index-four vertex $q$ which lie on the
interior of the moment map image. In fact, Figure
\ref{figure: image of moment}.(d) corresponds to
Tolman's example of a non-K\"{a}hler Hamiltonian GKM
manifold explained in Example \ref{example_tolman}. On
the contrary, $\Theta (p, q)$ is positive for any $(p,q) \in E_\Gamma$ 
in Figure \ref{figure: image of
moment} with $\mathrm{ind} ~p = 2$ and $\mathrm{ind} ~q = 4$.
\end{example}

By using Lemma \ref{lemma_condition_for_positive}, we
can state a more refined condition under which $\Theta
(p, q)$ is positive. For an index-two vertex $p,$ we
denote by $\gamma_p$ the cycle whose vertices consist
of $p$ and vertices connected by ascending
paths starting at $p,$ and call it the
{\em ascending cycle} starting at $p.$ In other
words, the set of all vertices contained in $\gamma_p$
is the right hand side of (\ref{equation: supp of Thom
class}). Note that $p$ is of index-two so that $p$
should be adjacent to at least one and at most two
index-four vertices by the three valency of $\Gamma$, see Section \ref{ssecGKMManifolds}.
In particular, $\gamma_p$ has three or four vertices. An ascending cycle is
called {\em triangular} (resp. {\em tetragonal}) if it
has three (resp. four) vertices. In other words, $\gamma_p$
is triangular if and only if $p$ is adjacent to
exactly one index-four vertex and to $r$. Also,
$\gamma_p$ is tetragonal if and only if $p$ is
adjacent to exactly two index-four vertices.

\begin{example}
Let us consider examples of ascending cycles in Figure \ref{figure: image of moment}.
Each of (a) and (b) has one triangular and no tetragonal ascending cycle. And each
of (c), (e), and (f) has one triangular and one tetragonal
ascending cycles. Each of (d) and (g) has no triangular ascending cycle and it has two tetragonal ascending cycles.
And (h) has no triangular ascending cycle and has three tetragonal ascending cycles.

\end{example}

For a tetragonal ascending cycle $\gamma_p$ starting
at $p,$ we denote by $\square \gamma_p$ the union of
images $\mu(S_e^2)$ for edges $e$ of $\gamma_p.$ Thus
$\square \gamma_p$ is a tetragon in $\mathfrak{t}^*.$
It is classical that tetragons are classified into
three types as follows, see Figure \ref{figure:
tetragons}.

\begin{lemma}\cite[p.50]{We} \label{lemma: tetragon}
Tetragons $\square ABCD$ in a plane are classified
into three types :
\begin{enumerate}
\item[(a)] convex {\em if for each edge $\ell$ of $\square ABCD$, $\{A, B, C, D\}$
is in the same side with respect to the straight
line generated by $\ell$,}
\item[(b)] concave {\em if the convex hull $\Conv \{A, B, C, D\}$
is triangular, i.e., a vertex is contained in the
interior of $\Conv \{A, B, C, D\},$}
\item[(c)] crossed {\em if there exist two opposite
line segments passing through each other.}
\end{enumerate}
\end{lemma}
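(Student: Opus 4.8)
The statement is a purely planar fact about four points, so the plan is to classify the configuration $\{A,B,C,D\}$ by means of its convex hull. First I would record that the four points are in general position, i.e. no three of them are collinear: $\square ABCD$ is the $\mu$-image of a $4$-cycle in $\Gamma$, each edge of the cycle maps to a straight line segment, and two consecutive such edges meet at a vertex $v$ of $\Gamma$ along edge-vectors that are linearly independent by the GKM condition; hence no vertex of the tetragon lies on the line through its two neighbours. Consequently $\hull\{A,B,C,D\}$ is a polygon having exactly three or exactly four of $A,B,C,D$ among its vertices.

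If the hull has only three vertices, then one of the four points --- after relabelling, $D$ --- lies in the interior of the triangle spanned by the other three, which is exactly case (b). So assume all four points are vertices of the hull, so that $P:=\hull\{A,B,C,D\}$ is a non-degenerate convex quadrilateral. Traversing $\partial P$ produces a cyclic order of the letters $A,B,C,D$; since there are only three distinct cyclic orders of four symbols and the dihedral automorphism group of the $4$-cycle $A\!-\!B\!-\!C\!-\!D\!-\!A$ interchanges the two orders different from $(A,B,C,D)$, we may relabel so that $\partial P$ visits the vertices either in the order $(A,B,C,D)$ or in the order $(A,B,D,C)$. In the first case each of the segments $AB,BC,CD,DA$ is an edge of the convex quadrilateral $P$, so all four points lie weakly on one side of the line extending each of them: this is case (a). In the second case $P$ has boundary edges $AB,BD,DC,CA$ and diagonals $AD$ and $BC$; thus among the segments of $\square ABCD$ the opposite pair $\{BC,DA\}$ is precisely the pair of diagonals of $P$, and these two segments cross in the interior of $P$: this is case (c).

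Finally I would check the three cases are mutually exclusive, which is immediate: concavity forces the hull to be a triangle and hence rules out the other two, while in a convex quadrilateral the two pairs of opposite sides are disjoint and the two diagonals meet only in the interior, so (a) and (c) cannot both hold. The only delicate point --- and the one worth recording for later use --- is the bookkeeping of the crossed case: one should note not merely that \emph{some} opposite pair of segments of $\square ABCD$ crosses, but that after the relabelling it is a definite pair (here $\{BC,DA\}$), realized as the diagonals of $\hull\{A,B,C,D\}$. This is what allows the subsequent case analysis in the paper to speak unambiguously of ``the crossing diagonal'' of a crossed $\square\gamma_p$. I expect this labelling normalization, rather than the geometry itself, to be the part that needs the most care.
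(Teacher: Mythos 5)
Your argument is correct, but note that the paper does not actually prove this lemma at all: it is stated as a classical fact with a citation to [W, p.~50] and no argument is given. What you have supplied is therefore a self-contained elementary proof where the paper relies on the literature. Your route --- first using the GKM condition to rule out three collinear vertices (and your observation that for any three of the four points, one is adjacent in the $4$-cycle to the other two, so pairwise linear independence of the weights applies, is the right way to see this), then splitting on whether $\hull\{A,B,C,D\}$ is a triangle or a quadrilateral, and in the latter case on the cyclic order of the labels along the hull boundary --- is sound: the three cyclic orders of four labels up to the dihedral symmetry of the cycle $A\!-\!B\!-\!C\!-\!D\!-\!A$ reduce to exactly the two subcases you treat, giving (a) when all four segments are hull edges and (c) when one opposite pair coincides with the hull diagonals. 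The mutual exclusivity check is also right. Your closing normalization of \emph{which} opposite pair crosses is a genuine refinement beyond what the lemma asserts; the paper only ever needs the existence statement (in Case (iii) of the proof of Proposition~\ref{proposition: convex if 8} the crossing pair is identified ad hoc from the boundary/interior structure), so this extra bookkeeping is harmless but not required. The one caveat is that, as stated, the lemma concerns arbitrary tetragons, and your general-position step imports a hypothesis from the intended application ($\square\gamma_p$ for a GKM graph); this is exactly the setting in which the lemma is used, so the restriction is appropriate, but it is worth flagging that the trichotomy as literally stated would need degenerate (collinear) configurations excluded.
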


\begin{figure}[ht]
\begin{center}
\mbox{

\subfigure[convex]{
\begin{pspicture}(-0.8,-0.2)(2.8,2)\footnotesize
\psline(0,0)(2,0.5)(1.5,2)(0.5,2)(0,0)
\uput[l](0,0){$A$} %
\uput[r](2,0.5){$B$} %
\uput[r](1.5,2){$C$} %
\uput[l](0.5,2){$D$}  %
\end{pspicture}}

\subfigure[concave]{
\begin{pspicture}(-0.8,-0.2)(2.8,2)\footnotesize
\psline(0,0)(2,1.5)(0,2)(0.5,1)(0,0)
\uput[l](0,0){$A$} %
\uput[r](2,1.5){$B$} %
\uput[l](0,2){$C$} %
\uput[l](0.5,1){$D$}  %
\end{pspicture}}

\subfigure[crossed]{
\begin{pspicture}(-0.8,-0.2)(2.8,2)\footnotesize
\psline(0,0)(2,0.5)(0,2)(2,2)(0,0)
\uput[l](0,0){$A$} %
\uput[r](2,0.5){$B$} %
\uput[l](0,2){$C$} %
\uput[r](2,2){$D$}  %
\end{pspicture}}

}
\end{center}
\caption{\label{figure: tetragons} Three types of
tetragons}
\end{figure}
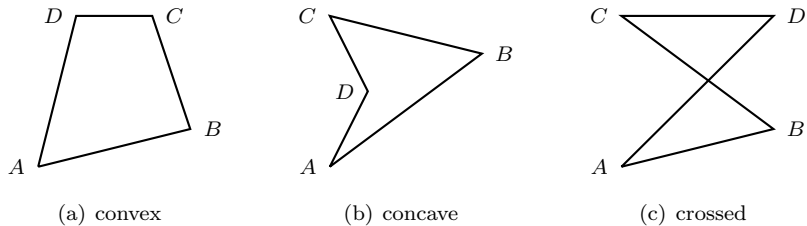

Similarly, we call a tetragonal ascending cycle
$\gamma_p$ {\em convex, concave,} or
{\em crossed} if the tetragon $\square \gamma_p$ is
convex, concave, or crossed, respectively. We
introduce a new condition which guarantees that $\Theta (p,
q)$ is positive.

\begin{proposition}
\label{proposition_positive_if_convex} For an adjacent
index-two vertex $p$ and an index-four vertex $q,$ if
the ascending cycle $\gamma_p$ is tetragonal and
convex, then $\Theta (p, q)$ is positive.
\end{proposition}

Before we prove Proposition \ref{proposition_positive_if_convex}, we give the following lemma without proof,
which is essentially the same as Lemma \ref{lemma_GKM_weight_pair}.

\begin{lemma} \label{lemma_GKM_weight_pair_2}
For each oriented edge $e \in E_\Gamma,$ we can
reorder $\alpha_{j, i(e)}$'s and $\alpha_{j, t(e)}$'s
so that (1) $\alpha_{n, t(e)} = -\alpha_{n, i(e)} =
-\alpha (e),$ and (2) $\alpha_{j, t(e)},$ $\alpha_{j,
i(e)}$ are in the same side with respect to $\R \cdot
\alpha (e)$ for each $1 \le j \le n-1.$
\end{lemma}

\begin{proof}[Proof of Proposition \ref{proposition_positive_if_convex}]
Pick the vertex $v \ne p$ which is adjacent to and
below $q$. By the assumption,
there exists another index-four vertex $q^\prime \ne
q$ which is adjacent to and above $p$, see Figure \ref{figure: positive if convex}.
Since $\square \gamma_p$ is convex, $\alpha (p,
q^\prime)$ and $\alpha (q, r)$ are in the same side
with respect to $\R \cdot \alpha (p, q)$ by Lemma
\ref{lemma: tetragon}. Applying Lemma
\ref{lemma_GKM_weight_pair_2} to the edge $(p, q),$
two weights $\Lambda_p^+ = \alpha (p, o)$ and $\alpha
(q, v)$ should be in the same side with respect to $\R
\cdot \alpha (p, q).$ Therefore, $\Theta (p, q)$ is
positive by Lemma \ref{lemma_condition_for_positive}.
\end{proof}

\begin{figure}[ht]
\begin{center}
\begin{pspicture}(-2,-1)(2.5,2) \footnotesize
\psline(-0.66,1)(0,0)(2,1)(2.5,2)(-0.66,1)
\psline(2,1)(2.5,0.25) \psline(0,0)(0,-0.5)
\psdots[dotsize=4pt](-0.66,1)(0,0)(2,1)(2.5,0.25)(0,-0.5)(2.5,2)

\psline[arrowsize=5pt]{->}(-2.5,0)(-2.5,1)
\uput[u](-2.5,1){$\xi$}

\uput[l](0,0){$\mu(p)$}
\uput[r](2,1){$\mu(q)$}
\uput[u](-0.86,1){$\mu(q^\prime)$}
\uput[r](2.4,2.2){$\mu(r)$}
\uput[r](2.5,0.25){$\mu(v)$}
\uput[d](0,-0.5){$\mu(o)$}
\end{pspicture}
\end{center}
\caption{ \label{figure: positive if convex} Proof of
Proposition \ref{proposition_positive_if_convex}}
\end{figure}
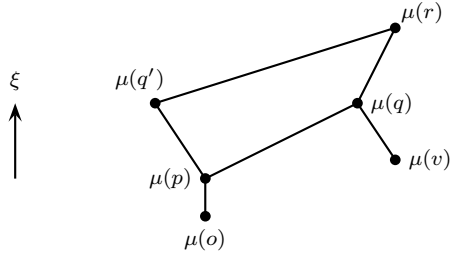

\begin{example}
\label{example: negative Theta 2} Let us consider
Figure \ref{figure: image of moment}.(d). For the
index-two vertex $p$ in the interior of $\mu(M)$,
$\gamma_p$ is not convex but concave. As we have seen in Example \ref{example: negative Theta}, 
$\Theta(p,q)$ is negative for the index-four vertex $q$ in the interior of $\mu(M)$.
On the other hand, for any other index-two vertex $p$ in Figure 
\ref{figure: image of moment}, if $\gamma_p$ is tetragonal, it is convex.  
\end{example}

\subsection{Weak classification}
In addition to Proposition \ref{proposition_matrix_coefficient} and Proposition \ref{proposition_positive_if_convex},
we need to understand the GKM
graph more precisely to show that the determinant of
the matrix $A_2(M,\omega)$ is nonzero.
Let $\mathcal{E}$ and $\mathcal{V}$ be the numbers of
non-oriented edges and vertices of $\Gamma,$
respectively.

\begin{lemma}\label{lemma_vert_edge_relation}
Let $\mathcal{V}$ and $\mathcal{E}$ be given above. Then
\begin{itemize}
    \item $2\mathcal{E}=3\mathcal{V}$, and
    \item the number of index-two vertices, i.e., $b_2$ is equal to $\mathcal{V}/2-1$.
\end{itemize}
\end{lemma}
\begin{proof}
    The first statement follows from the three valency of $\Gamma.$
    Also, the second statement follows from the Poincar\'{e} duality.
\end{proof}

The following
proposition classifies all possible GKM graphs
$\Gamma$ into eight types according to the following
four criteria :
\begin{enumerate}
\item the shape of the moment map image $\mu(M),$
\item the number of vertices of $\Gamma,$
\item adjacency between $o$ and $r,$ and
\item the number of tetragonal ascending
cycles starting at index-two vertices.
\end{enumerate}

\begin{proposition} \label{proposition: graph shape}
Let $(M,\omega)$ be a six-dimensional closed Hamiltonian $T^2$-manifold.
Suppose that the action is GKM and its GKM graph $\Gamma$ is index-increasing with respect to 
some generic $\xi \in \frak{t}$. Then
$\Gamma$ satisfies one of (a)$\sim$(h) of Table \ref{table: graph shape}.
\begin{table}[ht!]
\begin{center}
{\footnotesize
\begin{tabular}{c||c|c|c|c}
      & $\mu(M)$  & $\mathcal{V}$  & $o$ is adjacent to $r$?   & the number of tetragonal ascending cycles   \\
      &           &                &                           & starting at an index-two vertex   \\
 \hhline{=#=|=|=|=}

 (a)  & triangle  & 4              & Yes                       & 0                                 \\
 (b)  & tetragon  & 4              & Yes                       & 0                                 \\
 (c)  & tetragon  & 6              & No                        & 1                                 \\
 (d)  & tetragon  & 6              & Yes                       & 2                                 \\
 (e)  & pentagon  & 6              & No                        & 1                                 \\
 (f)  & hexagon   & 6              & No                        & 1                                 \\
 (g)  & hexagon   & 6              & Yes                       & 2                                 \\
 (h)  & hexagon   & 8              & No                        & 3
\end{tabular}}
\quad \\ \quad \\ \quad \\
\caption{\label{table: graph shape} Eight types of
possible index increasing GKM graphs}
\end{center}
\end{table}
\end{proposition}

\begin{figure}[ht]
\begin{center}
\mbox{\subfigure[]{
\begin{pspicture}(-0.4,-0.2)(2.4,2)\footnotesize


\pspolygon[fillstyle=solid,fillcolor=lightgray](1,2)(0,0)(2,0.5)
\psline(1,2)(1,1)(0,0)(1,1)(2,0.5)
\psdots[dotsize=4pt](1,2)(1,1)(0,0)(2,0.5) \uput[l](1,
2){$\mu(r)$} \uput[l](0,0){$\mu(o)$}
\psline[arrowsize=6pt]{->}(-0.5,1.3)(-0.5,2)
\uput[l](-0.5,2){$\xi$}
\end{pspicture}}

\subfigure[]{
\begin{pspicture}(-0.4,-0.2)(2.4,2)\footnotesize
\pspolygon[fillstyle=solid,fillcolor=lightgray](1,0)(0,0.65)(0.75,2)(2,1.5)
\psline(1,0)(0.75,2) \psline(0,0.65)(2,1.5)
\psdots[dotsize=4pt](1,0)(0,0.65)(0.75,2)(2,1.5)
\uput[r](1, 0){$\mu(o)$} \uput[l](0.75,2){$\mu(r)$}
\end{pspicture}}

\subfigure[]{
\begin{pspicture}(-0.4,-0.2)(2.4,2)\footnotesize
\pspolygon[fillstyle=solid,fillcolor=lightgray](1.5,0)(2,1.75)(0.75,2)(0,0.5)
\psline(1.5,0)(1.4,1)(0.9,1.25)(0,0.5)
\psline(2,1.75)(1.4,1)(0.9,1.25)(0.75,2)
\psdots[dotsize=4pt](1.5,0)(2,1.75)(0.75,2)(0,0.5)(1.4,1)(0.9,1.25)
\uput[r](1.5, 0){$\mu(o)$} \uput[l](0.75,2){$\mu(r)$}
\end{pspicture}}

\subfigure[]{
\begin{pspicture}(-0.4,-0.2)(2.4,2)\footnotesize
\pspolygon[fillstyle=solid,fillcolor=lightgray](0,0)(2,0.5)(2,1.5)(0,2)
\psline(0,0)(0.5,0.65)(0.5, 1.25)(0,2)
\psline(2,0.5)(0.5, 1.25)(0.5,0.65)(2,1.5)
\psdots[dotsize=4pt](0,0)(2,0.5)(2,1.5)(0,2)(0.5,0.65)(0.5,
1.25) %
\uput[l](0, 0){$\mu(o)$} \uput[l](0,2){$\mu(r)$}
\end{pspicture}}
}

\mbox{

\subfigure[]{
\begin{pspicture}(-0.4,-0.2)(2.4,2.2)\footnotesize
\pspolygon[fillstyle=solid,fillcolor=lightgray](0.5,2)(1.75,1.75)(2,0.6)(1.25,0)(0,0.75)
\psline(0.5,2)(0.75,1.2)(2,0.6)(0.75,1.2)(0,0.75)
\psline(1.75,1.75)(1.25,0)
\psdots[dotsize=4pt](0.5,2)(0.75,1.2)(1.75,1.75)(2,0.6)(1.25,0)(0,0.75)
\uput[l](0.5, 2){$\mu(r)$} \uput[r](1.25,0){$\mu(o)$}
\end{pspicture}}

\subfigure[]{
\begin{pspicture}(-0.4,-0.2)(2.4,2.2)\footnotesize
\pspolygon[fillstyle=solid,fillcolor=lightgray](1,0)(1.75,0.5)(2,1.3)(1.3,2)(0.25,1.50)(0.1,0.5)
\psline(1,0)(0.25,1.5) \psline(0.1,0.5)(2,1.3)
\psline(1.75,0.5)(1.3,2)
\psdots[dotsize=4pt](1,0)(1.75,0.5)(2,1.3)(1.3,2)(0.25,1.5)(0.1,0.5)
\uput[l](1, 0){$\mu(o)$} \uput[r](1.3,2){$\mu(r)$}
\end{pspicture}}

\subfigure[]{
\begin{pspicture}(-0.4,-0.2)(2.4,2.2)\footnotesize
\pspolygon[fillstyle=solid,fillcolor=lightgray](0.75,0)(1.75,0.5)(2,1.4)(1.5,2)(0.25,1.65)(0,0.75)
\psline(0.75,0)(1.5,2) \psline(1.75,0.5)(0.25,1.65)
\psline(2,1.4)(0,0.75)
\psdots[dotsize=4pt](0.75,0)(1.75,0.5)(2,1.4)(1.5,2)(0.25,1.65)(0,0.75)
\uput[l](0.75, 0){$\mu(o)$} \uput[r](1.5,2){$\mu(r)$}
\end{pspicture}}

\subfigure[]{
\begin{pspicture}(-0.4,-0.2)(2.4,2.2)\footnotesize
\pspolygon[fillstyle=solid,fillcolor=lightgray](1,0)(1.9,0.5)(2,1.25)(1.5,2)(0,1.3)(0,0.5)
\psline(1,0)(1.1,0.5)(2,1.25)(1.1,0.5)(0,1.3)
\psline(1.9,0.5)(1.25,1.25)(1.5,2)(1.25,1.25)(0,0.5)
\psdots[dotsize=4pt](1,0)(1.9,0.5)(2,1.25)(1.5,2)(0,1.3)(0,0.5)(1.1,0.5)(1.25,1.25)
\uput[dl](1, 0){$\mu(o)$} \uput[r](1.5,2){$\mu(r)$}
\end{pspicture}}

}
\end{center}
\caption{\label{figure: image of moment} Examples of
eight types of possible index increasing GKM graphs}
\end{figure}
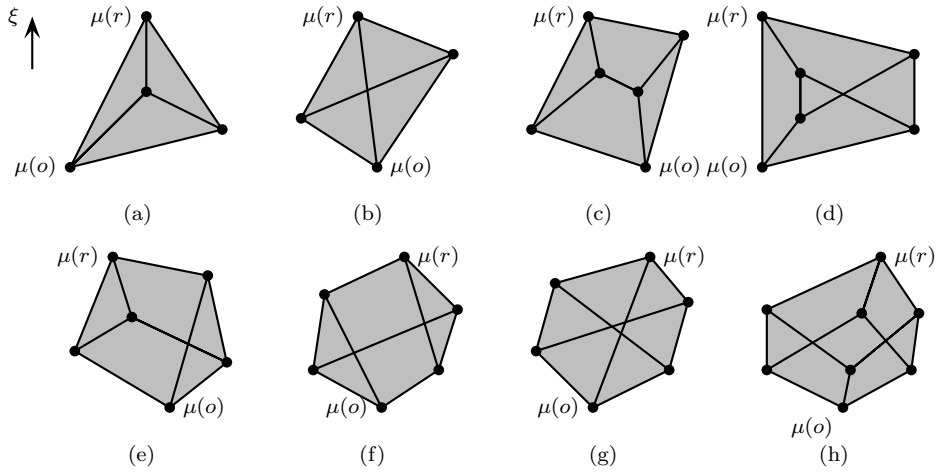

The proof of Proposition \ref{proposition: graph
shape} will be given in Section
\ref{secProofOfPropositionRefPropositionGraphShapeAndRefPropositionConvexIf8}.
In Figure \ref{figure: image of moment}, examples of the eight types of GKM graphs in Table \ref{table: graph shape}
are illustrated. Note that Proposition \ref{proposition: graph shape} does not claim that every possible 
index increasing GKM graph is one of those in Figure \ref{figure: image of moment}.  
For example, there exists an index increasing GKM graph satisfying Table \ref{table: graph shape}.(h) 
but is different from Figure \ref{figure: image of moment}.(h), see Figure \ref{figure_h}. 
Thus we may call Proposition
\ref{proposition: graph shape} a {\em weak classification} of index increasing GKM graphs of
closed six-dimensional Hamiltonian GKM manifolds.
Nevertheless, Table \ref{table: graph
shape}.(a)$\sim$(g) ($\mathcal{V} \leq 6$) are
corresponding to the Morton's classification of index increasing
GKM graphs of closed six-dimensional Hamiltonian GKM
manifolds with vertices less than or equal to six, see
\cite{Mo}.

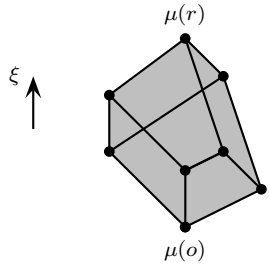
\begin{figure}[ht]
\begin{center}

\begin{pspicture}(-1,0)(2,3)\footnotesize
\pspolygon[fillstyle=solid,fillcolor=lightgray](1,0)(2,0.5)(1.5,2)(1,2.5)(0,1.75)(0,1)(1,0)

\psline(0,1.75)(1,0.75)(1.5,1)(1,2.5)
\psline(1,0)(1,0.75)(1.5,1)(2,0.5)

\psline(0,1)(1.5,2)

\psdots[dotsize=4pt](1,0)(2,0.5)(1.5,2)(1,2.5)(0,1.75)(0,1)(1,0)(1,0.75)(1.5,1)

\uput[d](1, 0){$\mu(o)$} \uput[u](1,2.5){$\mu(r)$}
\psline[arrowsize=6pt]{->}(-1,1.3)(-1,2)
\uput[l](-1,2){$\xi$}
\end{pspicture}

\end{center}
\caption{\label{figure_h} Example of Table
\ref{table: graph shape}.(h)}
\end{figure}

\begin{remark}
We can easily check that Tolman's example (Example \ref{example_tolman})
corresponds to Table \ref{table: graph shape}.(d).
See also \cite[Example 5.2 and Figure 1]{GT}.
\end{remark}

The following proposition will be used to prove our main theorem in Section \ref{ssecProofOfTheoremRefTheoremmain},
where the proof will be given in Section  \ref{secProofOfPropositionRefPropositionGraphShapeAndRefPropositionConvexIf8}. 

\begin{proposition} \label{proposition: convex if 8}
Suppose that $\Gamma$ is of type Table \ref{table: graph shape}.(h). 
Then every tetragonal ascending cycle in $\Gamma$ is convex. In particular, $\Theta(p,q)$ is positive 
for every index-two vertex $p$ and index-four vertex $q$ of $\Gamma$.
\end{proposition}

\subsection{Proof of Theorem \ref{theorem_main}}
\label{ssecProofOfTheoremRefTheoremmain}

Now, we are ready to prove our main theorem.

\begin{proof}[Proof of Proposition \ref{proposition_main}](Proof of Theorem \ref{theorem_main})
We first consider the case where a GKM graph $\Gamma$
satisfies Table \ref{table: graph shape}.(a) or (b).
In this case, we have $b_2 = 1$ and $H^2(M)$ is generated by the
symplectic class $[\omega]$. Since $[\omega^2] \neq 0$
in $H^4(M)$, the hard Lefschetz property of $(M,\omega)$ is automatically
satisfied. 

Second, suppose that $\Gamma$ satisfies Table
\ref{table: graph shape}.(c), (e), or (f).
In this case, we have $b_2=2$ and there is only one tetragonal ascending cycle starting at an index-two vertex. 
(See Table \ref{table: graph shape}.)
This implies that  the number of non-oriented edges connecting an index-two vertex and a four vertex is three. 
Since $A_2(M,\omega)$ is a $2 \times 2$ matrix with exactly three nonzero entries by Proposition
\ref{proposition_matrix_coefficient}, $A_2(M,\omega)$ should be nonsingular.

Third, assume that $\Gamma$ satisfies Table \ref{table: graph shape}.(h). Then $\Gamma$ has three
index-two vertices so that $b_2=3$. In particular, $A_2(M,\omega)$ is a $3 \times 3$ matrix.
Also, there are three tetragonal ascending cycles starting at an index-two vertex, that is, the ascending cycle 
starting at an index-two vertex $p_k$ is tetragonal for every $k=1,2,3$. 
Moreover, the ascending cycles are convex by Proposition \ref{proposition: convex if 8}.
Thus if $p_k$ and $q_j$ are adjacent, then $\Theta (p_k, q_j)$ is positive by Proposition
\ref{proposition_positive_if_convex}, and therefore $a_{jk}$ is positive for $1 \leq j, k \leq 3$ 
with $(p_k, q_j) \in E_\Gamma$ by Proposition
\ref{proposition_matrix_coefficient}.
So, by Proposition \ref{proposition_matrix_coefficient}, there are exactly three zeros in $A_2(M,\omega)$ and 
each zero appears exactly one time in each row and column. 
Reordering $p_k$'s and $q_j$'s, if necessary, we may assume that 
the diagonal entries of $A_2 (M,\omega)$ are all zero. Then,
\[
\det \, A_2(M,\omega) = a_{12} a_{23} a_{31} + a_{13}
a_{21} a_{32} > 0.
\]
Therefore, $A_2(M,\omega)$ is nonsingular.

Finally, consider the case where $\Gamma$ satisfies Table \ref{table: graph shape}.(d) or (g). 
In this case, we have $b_2=2$ (and hence $A_2(M,\omega)$ is a $2 \times 2$ matrix) 
and there are two tetragonal ascending cycles starting at an index-two vertex. 
In other words, the ascending cycle starting at each index-two vertex $p_k$ is tetragonal and hence 
it is adjacent to both $q_1$ and $q_2$. Thus all entries of $A_2(M,\omega)$ are nonzero by
Proposition \ref{proposition_matrix_coefficient}. 

To show that the determinant of $A_2(M,\omega)$ is
nonzero, we apply column operation on $A_2(M,\omega)$ to obtain a triangular matrix. 
To do this, we need the following lemma.

\begin{lemma} \label{lemma: two are independent}
Let $t_1$ and $t_2$ be two arbitrary nonzero real
numbers. If $\Gamma$ satisfies Table \ref{table: graph
shape}.(d) or (g), then the following (degree four) graph cohomology
class
\begin{equation}
\label{equation: linear combination} t_1 \cdot
\tau_{p_1}^+ \cdot \big( [\widetilde{\omega_\mu}] +
\mu(p_1) \big) + t_2 \cdot \tau_{p_2}^+ \cdot
\big( [\widetilde{\omega_\mu}] + \mu(p_2) \big)
\end{equation}
does not vanish simultaneously on $q_1$ and $q_2.$
\end{lemma}

\begin{proof}
Note that the class (\ref{equation: linear
combination}) vanishes on $o,$ $p_1,$ $p_2$ by Theorem
\ref{theorem_Thom_class} and Lemma
\ref{lemma_equivariant_symplectic}. Moreover, if
(\ref{equation: linear combination}) vanishes on $q_1$
and $q_2$ simultaneously, then (\ref{equation: linear
combination}) should be the zero class by Lemma
\ref{lemma_zero_at_a_vertex}.(2). Thus we need only show that (\ref{equation: linear
combination}) never vanishes on $r$.

Therefore, it is enough to prove that the following two linear polynomials 
\begin{equation}
\label{equation: two are independent} %
\Big[ \tau_{p_1}^+ \cdot \big( [\widetilde{\omega_\mu}]
+ \mu(p_1) \big) \Big] (r) \quad \text{ and }
\quad \Big[ \tau_{p_2}^+ \cdot \big(
[\widetilde{\omega_\mu}] + \mu(p_2) \big) \Big] (r)
\end{equation}
in $\sym(\mathfrak{t}^*)$ are $\R$-linearly independent. 
We first compute
$\tau_{p_k}^+(r)$ as follows. Since $\tau_{p_k}^+$ is
zero at $o$ for $k=1,2$ by Theorem \ref{theorem_Thom_class}, and
$o$ and $r$ are adjacent by Table \ref{table: graph shape}.(d) and (g), we have
\[
\tau_{p_k}^+(r) = d_k \cdot \alpha (r, o)
\]
for some real numbers $d_k$ by Lemma \ref{lemma_zero_at_a_vertex}.(1).

We claim that $d_k$'s are all nonzero. Suppose that
$d_k$ is zero for some $k,$ i.e. $\tau_{p_k}^+(r) =
0.$ Without loss of generality, we may assume that
$k=1$. Then $\tau_{p_1}^+$ vanishes on $r.$ Moreover,
$\tau_{p_1}^+$ vanishes on $p_{2}$ by (\ref{equation:
supp of Thom class}). Since each of $q_1$ and $q_2$ is
adjacent to both $p_1$ and $p_2$,
$\tau_{p_1}^+(q_j)$ is divided by both $\alpha (q_j,
r)$ and $\alpha (q_j, p_{2})$ for each $j=1,2$ by
Lemma \ref{lemma_zero_at_a_vertex}.(1). However, two
weights $\alpha (q_j, r)$ and $\alpha (q_j, p_{2})$
are linearly independent by the GKM condition (2) and $\tau_{p_1}^+(q_j)$ is of polynomial
degree 1 in $\sym (\mathfrak{t}^*).$ Thus we have
$\tau_{p_1}^+(q_j)=0$ and it is a contradiction by
Lemme \ref{lemma_condition_for_nonzero}. 
Thus $d_1$ is nonzero. Also, we obtain $d_2 \neq 0$ in a similar way. 
Therefore, the polynomials in (\ref{equation: two are
independent}) can be expressed by
\[
\big[ \tau_{p_k}^+ \cdot \big( [\widetilde{\omega_\mu}] +
\mu(p_k) \big) \big] (r) = d_k \cdot \alpha (r, o)
\cdot \big( - \mu(r) + \mu(p_k) \big)
\]
by Lemma \ref{lemma_equivariant_symplectic} for $k=1,2$.

Now, it is enough to show that
\[
\mu(r) - \mu(p_1) \qquad \text{ and } \qquad \mu(r) -
\mu(p_2)
\]
are $\R$-linearly independent. 
To the contrary, suppose that they are linearly dependent. 
Then $\mu(r)$, $\mu(p_1)$, and $\mu(p_2)$ should be collinear. Let us first consider the 
case of Table \ref{table: graph shape}.(d). Then there
exists index-four interior vertex, which is assumed to
be $q_1$, adjacent to $r$, $p_1$, and $p_2$. 
Similarly, we can easily see that $r$ is adjacent to
$o$, $q_1$, and $q_2$. Note that if $\mu(p_1)$ and
$\mu(p_2)$ are in the same side with respect to the
straight line $\overleftrightarrow{\mu(r)\mu(q_1)}$,
then both
$\mu(o)$ and $\mu(q_2)$ must be in the same side with
$\mu(p_k)$'s by Lemma \ref{lemma_GKM_weight_pair_2} so
that $\overleftrightarrow{\mu(r)\mu(q_1)}$ is on the
boundary of $\mu(M)$, which contradicts that $q_1$ is
an interior point of the moment polytope $\mu(M)$. Thus $\mu(p_1)$ and $\mu(p_2)$
cannot be in the same side with respect to the
straight line $\overleftrightarrow{\mu(r)\mu(q)},$ and
hence $\mu(r)$, $\mu(p_1)$, and $\mu(p_2)$ are not
colinear. 

For the case of (g), $\mu(p_1)$ and $\mu(p_2)$ are vertices of the moment polytope $\mu(M)$ as well as $\mu(r)$. 
Then it is straightforward that $\mu(r) - \mu(p_1)$ and $\mu(r) - \mu(p_2)$ are linearly independent.
\end{proof}

We go back to the proof of Proposition \ref{proposition_main}.
Since every $a_{jk}$ is nonzero, we can take $t_0 = - \frac{a_{12}}{a_{11}} \neq 0$ so
that $a_{12} + t_0 \cdot a_{11}=0$. Since
\begin{equation*}
\det \left(
\begin{array}{cc}
a_{11} & a_{12}  \\
a_{21} & a_{22}  \\
\end{array}
\right)
=
\det \left(
\begin{array}{cc}
a_{11} & a_{12} + t_0 \cdot a_{11} \\
a_{21} & a_{22} + t_0 \cdot a_{21} \\
\end{array}
\right)
=
\det \left(
\begin{array}{cc}
a_{11} & 0  \\
a_{21} & a_{22} + t_0 \cdot a_{21} \\
\end{array}
\right),
\end{equation*}
it is enough to show that $a_{22} + t_0 \cdot a_{21} \ne 0.$ 

Consider the
following equivariant cohomology class
\[
\tau_{p_k}^+ \cdot \big( [\widetilde{\omega_\mu}]
+ \mu(p_k) \big) \cdot \tau_{q_j}^- \in H^6_T(M).
\]
Using $f^* \big( [\widetilde{\omega_\mu}] - \mu(p_k) \big) = [\omega]$
and the ABBV-localization
theorem, we have
\begin{align}
\label{equation: coefficient calculation}
a_{jk} &= \langle f^* (\tau_{p_k}^+) \wedge [\omega] \wedge f^* (\tau_{q_j}^-), [M] \rangle \\
\notag &= \int_M ~ \tau_{p_k}^+ \cdot \big(
[\widetilde{\omega}_\mu] + \mu(p_k) \big) \cdot
\tau_{q_j}^- \\
\notag&= \sum_{v \in V_\Gamma} \Big[
\tau_{p_k}^+ \cdot \big( [\widetilde{\omega}_\mu]
+ \mu(p_k) \big) \cdot
\tau_{q_j}^- \Big] (v) \Big/ \Lambda_v \\
\notag &= \Big[ \tau_{p_k}^+ \cdot \big(
[\widetilde{\omega}_\mu] + \mu(p_k) \big) \cdot
\tau_{q_j}^- \Big] (q_j) \Big/ \Lambda_{q_j} \\
\notag &= \tau_{p_k}^+ (q_j) \cdot \big(
-\mu(q_j) + \mu(p_k) \big) \Big/ \Lambda_{q_j}^+
\qquad \quad \Big( ~ = \Big[ \tau_{p_k}^+
\cdot \big( [\widetilde{\omega}_\mu] + \mu(p_k)
\big) \Big] (q_j) \Big/ \Lambda_{q_j}^+ ~ \Big).
\end{align}
In the fourth equality, we use Theorem
\ref{theorem_Thom_class} and the followings :
\begin{align*}
\bullet ~&\supp \tau_{p_k}^+ \subset \{ p_k \} \cup \{
\text{index-4 vertices adjacent to } p_k \}
\cup \{ \text{the index-6 vertex } r \}, \\
\bullet ~& \supp \tau_{q_j}^- \subset \{ q_j \} \cup
\{ \text{index-2 vertices adjacent to } q_j \}
\cup \{ \text{the index-0 vertex } o \}, ~\mathrm{and}\\
\bullet ~& \big( [\widetilde{\omega_\mu}] + \mu(p_k) \big)
(p_k) = -\mu(p_k) + \mu(p_k) = 0
\end{align*}
obtained from (\ref{equation: supp of Thom class}) and
Lemma \ref{lemma_equivariant_symplectic}. 
Then, by \eqref{equation: coefficient calculation}, we can easily see that 
\begin{align*}
a_{jk} ~&=~ \Big[ \tau_{p_k}^+ \cdot \big(
[\widetilde{\omega}_\mu] + \mu(p_k) \big) \Big] (q_j)
\Big/ \Lambda_{q_j}^+,
\qquad \text{and} \\
a_{j2} + t_0 \cdot a_{j1} ~&=~ \Big[ \tau_{p_2}^+
\cdot \big( [\widetilde{\omega}_\mu] + \mu(p_2) \big) +
t_0 \cdot \tau_{p_1}^+ \cdot \big(
[\widetilde{\omega}_\mu] + \mu(p_1) \big) \Big] (q_j)
\Big/ \Lambda_{q_j}^+.
\end{align*}
Since $t _0\neq 0$, both $a_{12} + t_0 \cdot a_{11}$ and $a_{22} + t_0 \cdot a_{21}$ do not vanish simultaneously by Lemma
\ref{lemma: two are independent}. Therefore, we have $a_{22} + t_0 \cdot a_{21} \ne 0$.
This completes the proof.
\end{proof}

\bigskip

\section{Proof of Proposition \ref{proposition: graph shape} and \ref{proposition: convex if 8}}
\label{secProofOfPropositionRefPropositionGraphShapeAndRefPropositionConvexIf8}

In this section, we prove Proposition
\ref{proposition: graph shape} and \ref{proposition:
convex if 8} used in Section \ref{secSixDimensionalHamiltonianGKMManifoldsWithAnIndexIncreasingGraph}.
To begin with, we introduce the following terminologies.

\begin{definition} \label{definition: interior and
boundary} A vertex $v$ is called a {\em boundary
vertex} (resp. an {\em interior vertex}) if $\mu(v)$ is
contained in the boundary (resp. interior) of $\mu(M)$.
Similarly, an edge $e$ is called a {\em boundary edge} (resp. an {\em interior edge}) if
$\mu(S_e^2)$ is contained in the boundary (resp. interior) of $\mu(M)$.
A path $(v_0, \cdots, v_l)$ of $\Gamma$ is called a {\em boundary path} 
if each edge $(v_j, v_{j+1})$ is a boundary for every $0 \le j \le l-1$. 
\end{definition}

Now, we give the proof of Proposition
\ref{proposition: graph shape} as follows.

\begin{proof}[Proof of Proposition \ref{proposition: graph shape}]
Consider two ascending boundary paths
\[
(v_0, \cdots, v_l) \qquad \text{and} \qquad
(v_0^\prime, \cdots, v_{l^\prime}^\prime)
\]
from $o$ to $r$. Then $\mu(M)$ is a convex $(l+l^\prime)$-gon by the 
Atiyah-Guillemin-Sternberg convexity theorem \cite{At,GS}
so that both paths
cannot have length one simultaneously. Moreover, by
the index increasing property, the lengths of the two paths
are less than or equal to three, i.e. $l, \, l^\prime
\le 3.$ Therefore, we have
\[
2 \le l \cdot l^\prime \quad \text{ and } \quad l, \,
l^\prime \le 3.
\]
Without loss of generality, we may assume that $l \le l^\prime.$ Then there are exactly five possible cases : 
\[
	(l, l') \in \{ (1,2), (2,2), (1,3), (2,3), (3,3) \}.
\]

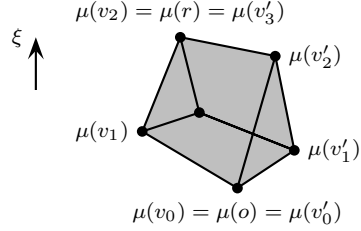
\begin{figure}[ht]
\begin{center}
\begin{pspicture}(0,0)(2,2.5)\footnotesize
\pspolygon[fillstyle=solid,fillcolor=lightgray](0.5,2)(1.75,1.75)(2,0.5)(1.25,0)(0,0.75)
\psline(0.5,2)(0.75,1)(2,0.5)(0.75,1)(0,0.75)
\psline(1.75,1.75)(1.25,0)
\psdots[dotsize=4pt](0.5,2)(0.75,1)(1.75,1.75)(2,0.5)(1.25,0)(0,0.75)

\uput[l](0,0.75){$\mu(v_1)$} \uput[r](2,0.5){$\mu(v_1^\prime)$}
\uput[r](1.75,1.75){$\mu(v_2^\prime)$} \uput[u](0.5, 2){$\mu(v_2) = \mu(r) = \mu(v_3^\prime)$} 
\uput[d](1.25,0){$\mu(v_0) = \mu(o) = \mu(v_0^\prime)$}

\psline[arrowsize=6pt]{->}(-1.4,1.3)(-1.4,2)
\uput[l](-1.4,2){$\xi$}
\end{pspicture}
\end{center}
\caption{\label{figure: ascending boundary path} Example of ascending boundary paths for $(l,l') = (2,3)$}
\end{figure}

If $l=1, l^\prime=2,$ then $\mu(M)$ is a triangle so that $o$ and $r$ are adjacent.
We may assume that $v_1^\prime$ is of index-two. (If not, then $v_1^\prime$ is of index-two with respect to $-\xi$ 
so that we need only take $-\xi$ instead of $\xi$.) 
Then there exists at least one index-four interior vertex by the
Poincar\'{e} duality.
Moreover, there cannot exist more than one index-four
vertex by the three valency at $r$, since any index-four vertex is adjacent to $r$ 
by the index-increasing property and $r$ is already
adjacent to two vertices $o$ and $v_1^\prime$. Therefore, $\Gamma$ has 
only one index-four vertex so that $\Gamma$ has four vertices, that is, $\Gamma$ 
is a complete graph and the unique ascending cycle
$\gamma_{v_1^\prime}$ starting at the unique index-two vertex $v_1^\prime$ is triangular. 
Thus $\Gamma$ is the case of Table \ref{table: graph shape}.(a).

If $l=2,$ $l^\prime=2,$ then $\mu(M)$ is a tetragon.
Then each of $o$ and $r$ is adjacent to
the boundary vertices $v_1$ and $v_1^\prime$. We first
show that $v_1$ and $v_1^\prime$ have different
indices. If $v_1$ and $v_1^\prime$ have the same
index, say two, then there should be at least two
index-four interior vertices by the Poincar\'{e}
duality. Thus $r$ should be adjacent to at least four
vertices and this contradicts the three valency at $r$.
Therefore, $v_1$ and $v_1^\prime$ must have different
indices. Assume that $v_1$ is of index-two and $v_1^\prime$ is of index-four. 
Then $\gamma_{v_1}$ is triangular since $v_1$ is adjacent to $r$. 

Now, there are two possible cases according to adjacency of $o$ and $r$.
If $o$ and $r$ are adjacent, then $o$ (resp. $r$) is adjacent to the three vertices  
$r$, $v_1$, and $v_1^\prime$ (resp. $o$, $v_1$, and $v_1^\prime$)
so that there is no other vertex except for $o,$ $r,$ $v_1,$ and $v_1^\prime$ 
since any vertex other than $o,$ $r$ should be adjacent to $o$ or $r$ by the index
increasing property. In other words, $\Gamma$ has four
vertices and $v_1$ must adjacent to $v_1^\prime$. This is the case of Table \ref{table: graph shape}.(b).

Second, assume that $o$ and $r$ are not adjacent.
Since each of $o$ and $r$ is adjacent to $v_1$ and $v_1^\prime$, 
there are exactly two interior vertices by the three valency of $\Gamma$ and the Poincar\'{e} duality, 
and therefore $\Gamma$ has six vertices. 
Since $v_1$ and $v_1^\prime$ have different indices, two interior
vertices have different indices by the Poincar\'{e}
duality. Let $p$ and $q$ be the index-two and index-four interior vertex respectively.
Note that $v_1$ and $v_1^\prime$ are not adjacent, otherwise 
$\Gamma$ cannot be three-valent at $p$ and $q$.
Therefore, $p$ and $v_1^\prime$ are adjacent and that $v_1$ and $q$ are adjacent by the index
increasing property. Also $p$ is adjacent to $q$ by the three valency of $\Gamma$. 
Consequently, $p$ is adjacent to three vertices $q,$ $v_1^\prime,$ $o$ so that $\gamma_p$ is
tetragonal. This is the case of Table \ref{table: graph shape}.(c).

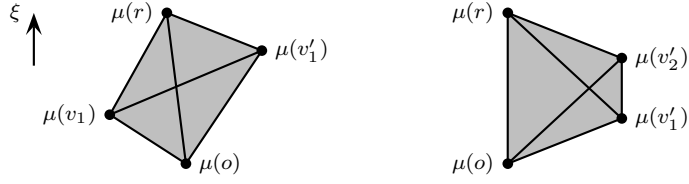
\begin{figure}[ht]
\begin{center}
\mbox{

\subfigure{
\begin{pspicture}(-1.5,0)(3.5,2)\footnotesize
\pspolygon[fillstyle=solid,fillcolor=lightgray](1,0)(0,0.65)(0.75,2)(2,1.5)
\psline(1,0)(0.75,2) \psline(0,0.65)(2,1.5)
\psdots[dotsize=4pt](1,0)(0,0.65)(0.75,2)(2,1.5)
\uput[r](1, 0){$\mu(o)$} \uput[l](0.75,2){$\mu(r)$}
\uput[l](0,0.65){$\mu(v_1)$} \uput[r](2,1.5){$\mu(v_1^\prime)$}

\psline[arrowsize=6pt]{->}(-1,1.3)(-1,2)
\uput[l](-1,2){$\xi$}
\end{pspicture}}

\subfigure{
\begin{pspicture}(-1.5,0)(3.5,2)\footnotesize
\pspolygon[fillstyle=solid,fillcolor=lightgray](0,0)(1.5,0.6)(1.5,1.4)(0,2)
\psline(0,0)(1.5,1.4) \psline(1.5,0.6)(0,2)
\psdots[dotsize=4pt](0,0)(1.5,0.6)(1.5,1.4)(0,2) %
\uput[l](0, 0){$\mu(o)$} \uput[l](0,2){$\mu(r)$}
\uput[r](1.5,0.6){$\mu(v_1^\prime)$}
\uput[r](1.5,1.4){$\mu(v_2^\prime)$}
\end{pspicture}}
}
\end{center}
\caption{\label{figure: (b) case} Examples of Table
\ref{table: graph shape}.(b)}
\end{figure}

If $l=1, l^\prime=3,$ then $\mu(M)$ is a tetragon and $o$ is adjacent to $r$. 
Note that $v_1^\prime$ and $v_2^\prime$ are of index-two and of index-four by the index increasing property, respectively. 
Note that $o$ (resp. $r$) is adjacent to $r$ and $v_1^\prime$ (resp. $o$ and $v_2^\prime$) and hence 
there are at most two interior vertices. By the Poincar\'{e} duality, the number of interior vertices is zero or two. 

First, if there is no interior vertex, then $\Gamma$ has four vertices and $v_1^\prime$ (resp. $v_2^\prime$) 
is adjacent to $r$ (resp. $o$), which is the case of Table \ref{table: graph shape}.(b). 
Second, assume that there are two interior vertices,
namely, the index-two interior vertex $p$ and the
index-four interior vertex $q.$ Then $r$ is adjacent
to three vertices $o,$ $v_2^\prime,$ and $q.$
Similarly, $o$ is adjacent to $r$, $v_1^\prime$, and
$p$. By the three valency of $\Gamma$, each of $p$ and $v_1^\prime$ is adjacent to $v_2^\prime$ and $q$, 
and therefore the ascending cycles $\gamma_p$ and $\gamma_{v_1^\prime}$ are tetragonal. 
This is the case of Table \ref{table: graph shape}.(d). 

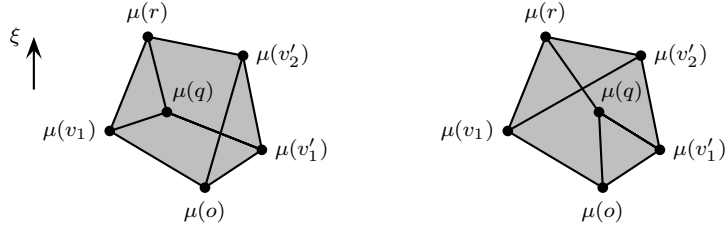
\begin{figure}[ht]
\begin{center}
\mbox{

\subfigure{
\begin{pspicture}(-1.5,0)(3.5,2)\footnotesize
\pspolygon[fillstyle=solid,fillcolor=lightgray](0.5,2)(1.75,1.75)(2,0.5)(1.25,0)(0,0.75)
\psline(0.5,2)(0.75,1)(2,0.5)(0.75,1)(0,0.75)
\psline(1.75,1.75)(1.25,0)
\psdots[dotsize=4pt](0.5,2)(0.75,1)(1.75,1.75)(2,0.5)(1.25,0)(0,0.75)

\uput[l](0,0.75){$\mu(v_1)$} \uput[r](2,0.5){$\mu(v_1^\prime)$}
\uput[r](1.75,1.75){$\mu(v_2^\prime)$} \uput[u](0.5,
2){$\mu(r)$} \uput[d](1.25,0){$\mu(o)$} \uput[ur](0.7,1){$\mu(q)$}

\psline[arrowsize=6pt]{->}(-1,1.3)(-1,2)
\uput[l](-1,2){$\xi$}
\end{pspicture}}

\subfigure{
\begin{pspicture}(-1.5,0)(3.5,2)\footnotesize
\pspolygon[fillstyle=solid,fillcolor=lightgray](0.5,2)
(1.75,1.75)(2,0.5)(1.25,0)(0,0.75)
\psline(0.5,2)(1.2,1)(2,0.5)(1.2,1)
\psline(0,0.75)(1.75,1.75) \psline(1.2,1)(1.25,0)
\psdots[dotsize=4pt](0.5,2)(1.2,1)(1.75,1.75)(2,0.5)
(1.25,0)(0,0.75)

\uput[l](0,0.75){$\mu(v_1)$} \uput[r](2,0.5){$\mu(v_1^\prime)$}
\uput[r](1.75,1.75){$\mu(v_2^\prime)$} \uput[u](0.5,
2){$\mu(r)$} \uput[d](1.25,0){$\mu(o)$} \uput[ur](1.1,1){$\mu(q)$}

\end{pspicture}}

}
\end{center}
\caption{\label{figure: 2,3 case} The case of $l = 2$
and $l^\prime = 3$ }
\end{figure}

If $l=2, l^\prime=3,$ then $\mu(M)$ is a pentagon.
Since $(v_0^\prime, v_1^\prime, v_2^\prime, v_3^\prime)$ is
an ascending boundary path from $v_0^\prime = o$ to
$v_3^\prime = r,$ $v_1^\prime$ is of index-two and 
$v_2^\prime$ is of index-four. Furthermore, we may assume that $v_1$ is of index two. 
Since $v_1$ is adjacent to $r,$ the ascending
cycle $\gamma_{v_1}$ is triangular. Note that there is exactly one interior vertex, say $q$, by the three valency of $\Gamma$
and the Poincar\'{e} duality. 
Then $r$ is adjacent to $v_1,$ $v_2^\prime,$ and $q$ so that $r$ is not adjacent to $o$. 
Also, $r$ is not adjacent to $v_1^\prime$. Thus $v_1^\prime$ is adjacent to $q$ and $v_2^\prime$
so that the ascending cycle
$\gamma_{v_1^\prime}$ is tetragonal. Consequently, there
is only one tetragonal ascending cycle $\gamma_{v_1^\prime}$ 
and this is the case Table \ref{table: graph shape}.(e).

If $l=3, l^\prime=3,$ then $\mu(M)$ is a hexagon. By
index increasing property, $v_1$ and $v_1^\prime$ are
of index-two, and $v_2$ and $v_2^\prime$ are of index
four. By the three valency at $o$ and $r$, there exist
at most two interior vertices so that there are two
possible cases according to the number of interior vertices. 
If $\Gamma$ has six vertices (with no interior vertex),
then this is the case of Table \ref{table: graph shape}.(f) if $o$ and $r$ is not adjacent, and 
of Table \ref{table: graph shape}.(g) if $o$ and $r$ is adjacent.
Also, if $\Gamma$ has eight vertices (with two interior vertices), 
then $r$ should be adjacent to three index-four vertices so that $r$ is not adjacent to $o$. 
Thus any ascending cycle is tetragonal and this is the case of Table \ref{table: graph shape}.(h)
\end{proof}

Now, we prove Proposition \ref{proposition: convex if 8}.
We first recall the following.

\begin{lemma} \label{lemma: interior}
A vertex $v$ is an interior vertex if and only if $\sum_{1 \le j \le
3} ~ \R^+ \cdot \alpha_{j, v} = \mathfrak{t}^*.$ In particular, if
$v$ is an interior vertex, then $\alpha_{1, v},$ $\alpha_{2, v}$ are
not in the same side with respect to $\R \cdot \alpha_{3, v}.$
\end{lemma}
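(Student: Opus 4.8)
The plan is to reduce the first (iff) statement to a standard fact about moment map images: a vertex $v$ of the GKM graph lies in the interior of $\mu(M)$ precisely when the three edge weights $\alpha_{e_{1,v}}, \alpha_{e_{2,v}}, \alpha_{e_{3,v}}$ emanating from $v$ positively span $\frak{g}^*$. First I would recall the local picture near the fixed point $z$ with $\mu(z)=\mu(v)$: by the local normal form (or the convexity package of Atiyah--Guillemin--Sternberg), a neighborhood of $\mu(v)$ in $\mu(M)$ coincides with a neighborhood of $\mu(v)$ in the convex polyhedral cone $\mu(v) + \sum_j \R^+\cdot\alpha_{e_{j,v}}$, since locally $M$ near $z$ looks like $\C^3$ with the linear $T$-action of weights $\alpha_{1,v},\alpha_{2,v},\alpha_{3,v}$, whose moment image is exactly that cone. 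Hence $\mu(v)$ is an interior point of $\mu(M)$ if and only if this cone is all of $\frak{g}^*$, i.e. $\sum_j \R^+\cdot\alpha_{e_{j,v}} = \frak{g}^*$, which is the first assertion. (One must check that a point of $\partial\mu(M)$ cannot be an interior point of the \emph{local} cone and vice versa; this is where the global convexity of $\mu(M)$ and the fact that $\mu(M)$ is the convex hull of the vertices $\mu(M^T)$ are used.)

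For the second statement, suppose $v$ is interior, so $\R^+\alpha_{e_{1,v}} + \R^+\alpha_{e_{2,v}} + \R^+\alpha_{e_{3,v}} = \frak{g}^*$, a two-dimensional space. I would argue by contradiction: if $\alpha_{e_{1,v}}$ and $\alpha_{e_{2,v}}$ were in the same side with respect to the line $\R\cdot\alpha_{e_{3,v}}$, then the closed half-plane on that side, together with the opposite ray $\R^+\cdot(-\alpha_{e_{3,v}})$ which is \emph{not} reached, would show that the positive span of the three vectors omits an open half-line, contradicting that it equals $\frak{g}^*$. More carefully: pick $v_0 \in \frak{g}$ perpendicular to $\alpha_{e_{3,v}}$; then $\langle \alpha_{e_{1,v}}, v_0\rangle$ and $\langle \alpha_{e_{2,v}}, v_0\rangle$ have the same (nonzero, by pairwise linear independence) sign, while $\langle \alpha_{e_{3,v}}, v_0\rangle = 0$, so every nonnegative combination $\sum_j c_j\alpha_{e_{j,v}}$ pairs with $v_0$ to a number of that same sign or zero — hence the positive span misses all vectors pairing negatively with $v_0$, contradicting surjectivity onto $\frak{g}^*$.

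The main obstacle I expect is the first step: justifying rigorously that the moment image near $\mu(v)$ is exactly the local weight cone, and in particular that ``$\mu(v)$ interior in $\mu(M)$'' is equivalent to ``the weight cone is everything.'' This requires invoking the local normal form for Hamiltonian torus actions near a fixed point together with the Atiyah--Guillemin--Sternberg convexity theorem (so that $\mu(M)$ is the convex hull of $\mu(M^T)$, and a boundary point of $\mu(M)$ is genuinely a boundary point of the local cone). Once that geometric input is in place, both conclusions follow from elementary two-dimensional convex-cone bookkeeping, and the pairwise linear independence of the weights (part of the GKM hypothesis) guarantees all the sign conditions are strict.
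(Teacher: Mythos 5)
The paper does not actually prove this lemma: it simply cites \cite{Km} (Lemma 2 and Example 2 there), so there is no in-text argument to compare against. Your self-contained sketch is the standard one and is essentially correct. The second assertion is complete as you state it: if $\alpha_{e_{1,v}}$ and $\alpha_{e_{2,v}}$ lay in one closed half-plane bounded by $\R\cdot\alpha_{e_{3,v}}$, then pairing against a conormal $v_0$ of that line shows the positive span sits in a closed half-plane, contradicting $\sum_j\R^+\alpha_{e_{j,v}}=\frak{g}^*$; pairwise linear independence of the weights (the GKM hypothesis) makes the relevant pairings nonzero, exactly as you say. For the first assertion you correctly isolate the one nontrivial input, namely that a neighborhood of $\mu(v)$ in $\mu(M)$ coincides with a neighborhood of the apex in the weight cone $\mu(v)+\sum_j\R^{\ge 0}\alpha_{e_{j,v}}$; this is the local convexity/normal-form theorem (Guillemin--Sternberg, Sjamaar; it is also the content of the cited result in \cite{Km}), and Atiyah--Guillemin--Sternberg convexity alone does not give the containment $\mu(M)\subseteq\mu(v)+C_v$ near $\mu(v)$, so your flagging of this as the essential external ingredient is accurate. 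One small point worth making explicit: once the local identification is granted, the equivalence follows because a closed convex cone in $\R^2$ properly contained in $\R^2$ (even one equal to a closed half-plane, which can occur for three pairwise independent generators) always has its apex on its boundary, whereas the full plane does not; with that observation both directions of the ``if and only if'' are immediate and no separate appeal to $\mu(M)=\hull\,\mu(M^T)$ is needed.
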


\begin{proof}
See \cite[Lemma 2 and Example 2]{Km}.
\end{proof}

\begin{proof}[Proof of Proposition
\ref{proposition: convex if 8}] We label each vertex as
in Figure \ref{figure: proof of convex} :
\begin{itemize}
    \item two ascending boundary paths from $o$ to $r$ are $(o, p_1, q_1, r)$ and $(o, p_3, q_3, r)$, and
    \item $p_2$ and $q_2$ are the interior vertices of index-two and four, respectively.
\end{itemize}
Note that 
\begin{itemize}
	\item every ascending cycle starting at an index-two vertex is tetragonal by Table \ref{table: graph shape},
		and therefore each $p_k$ (resp. $q_j$) is not adjacent to $r$ (resp. $o$), and 
	\item each tetragonal ascending cycle contains two index-four vertices, 
		it contains at least one boundary vertex of index-four.
\end{itemize}
We also note that, by interchanging $p_1$ and $p_3$ (resp. $q_1$ and $q_3$) if  necessary, 
there are exactly four types of ascending cycles in $\Gamma$ : 
(i) an ascending cycle $\gamma_p$ ($p$ is any index-two vertex) contains $q_1$ and $q_3,$ (ii)
$\gamma_{p_1}$ contains $q_1$ and $q_2$, (iii) $\gamma_{p_2}$ contains $q_1$ and $q_2$, and (iv) $\gamma_{p_3}$ contains $q_1$ and $q_2$.

\subsection*{Case (i): $\gamma_p$ contains $q_1$ and $q_3$}
Note that $(q_1, r)$ and $(q_3, r)$ of $\gamma_p$ are boundary edges so that $\gamma_p$
cannot be crossed by Lemma \ref{lemma: tetragon}, see
Figure \ref{figure: proof of convex}.(a) for example. Furthermore, 
$p$ is below $q_1$, $q_3$, and $r$ by the index increasing property so that  
$p$ is not contained in the interior of $\Conv \{p, q_1, q_3, r\}.$
Thus $\gamma_p$ is convex.

\begin{figure}[ht]
\begin{center}
\mbox{\subfigure[]{
\begin{pspicture}(0,-0.2)(3,3)\footnotesize
\pspolygon[fillstyle=solid,fillcolor=lightgray,
linestyle=none](1,0)(2.5,0.5)(3,1.5)(2,3)(0.25,2.25)(0,1)
\psline(2,3)(3,1.5)(1.5,0.75)(0.25,2.25)(2,3)
\psdots[dotsize=3pt](1,0)(2.5,0.5)(3,1.5)(2,3)(0.25,2.25)(0,1)(1.5,0.75)(1.5,1.75)
\uput[d](1,0){$\mu(o)$} \uput[dr](2.5,0.5){$\mu(p_1)$}
\uput[r](3,1.5){$\mu(q_1)$} \uput[u](2,3){$\mu(r)$}
\uput[ul](0.25,2.25){$\mu(q_3)$} \uput[dl](0,1){$\mu(p_3)$}
\uput[d](1.5,0.75){$\mu(p_2)$} \uput[d](1.5,1.75){$\mu(q_2)$}
\psline[arrowsize=6pt]{->}(-1,1.3)(-1,2.2)
\uput[r](-1,2.2){$\xi$}
\end{pspicture}}

\qquad

\subfigure[]{
\begin{pspicture}(0,-0.2)(3,3)\footnotesize
\pspolygon[fillstyle=solid,fillcolor=lightgray,
linestyle=none](1,0)(2,0.3)(3,1.5)(1.3,3)(0.25,2.25)(0,1)
\psdots[dotsize=3pt](1,0)(2,0.3)(3,1.5)(1.3,3)(0.25,2.25)(0,1)(2.5,1.4)(2,1.65)
\psline(1.3,3)(3,1.5)(2,0.3)(2,1.65)(1.3,3)
\uput[d](1,0){$\mu(o)$} \uput[dr](2,0.3){$\mu(p_1)$}
\uput[r](3,1.5){$\mu(q_1)$} \uput[u](1.3,3){$\mu(r)$}
\uput[ul](0.25,2.25){$\mu(q_3)$} \uput[dl](0,1){$\mu(p_3)$}
\uput[d](2.5,1.4){$\mu(p_2)$} %
\uput[l](2,1.65){$\mu(q_2)$} \psline[linestyle=dotted,
dotsep=1.5pt,linewidth=1pt](2,1.65)(2.5,1.4)
\psline[linestyle=dotted,
dotsep=1.5pt](2.5,1.4)(3,1.5)
\psline[linestyle=dotted,
dotsep=1.5pt,linewidth=1pt](1,0)(2.5,1.4)
\end{pspicture}}

\qquad

\subfigure[]{
\begin{pspicture}(0,-0.2)(3,3)\footnotesize
\pspolygon[fillstyle=solid,fillcolor=lightgray,
linestyle=none](1,0)(2.5,0.5)(3,1.5)(2,3)(0.25,2.25)(0,1)
\psline[linestyle=dotted,
dotsep=1.5pt,linewidth=1pt](3,1.5)(2.5,0.5)(1,0)(0,1)(0.25,2.25)(2,3)
\psline[linestyle=dotted,
dotsep=1.5pt,linewidth=1pt](2.25,1.25)(1,0)
\psdots[dotsize=3pt](1,0)(2.5,0.5)(3,1.5)(2,3)(0.25,2.25)(0,1)(2.25,1.25)(1.75,2)
\psline(2,3)(1.75,2)(2.25,1.25)(3,1.5)(2,3)
\uput[d](1,0){$\mu(o)$} \uput[dr](2.5,0.5){$\mu(p_1)$}
\uput[r](3,1.5){$\mu(q_1)$} \uput[u](2,3){$\mu(r)$}
\uput[ul](0.25,2.25){$\mu(q_3)$} \uput[dl](0,1){$\mu(p_3)$}
\uput[dr](2.25,1.25){$\mu(p_2)$} \uput[ul](1.75,2){$\mu(q_2)$}
\end{pspicture}}
}

\mbox{

\subfigure[]{
\begin{pspicture}(0,-0.2)(3,3)\footnotesize
\pspolygon[fillstyle=solid,fillcolor=lightgray,
linestyle=none](1,0)(2.5,0.5)(3,1.5)(2,3)(0.25,2.25)(0,1)
\psdots[dotsize=3pt](1,0)(2.5,0.5)(3,1.5)(2,3)(0.25,2.25)(0,1)(2.25,1.25)(1.75,2)
\psline(2,3)(1.75,2)(2.25,1.25)(3,1.5)(2,3)
\psline[linestyle=dotted,
dotsep=1.5pt,linewidth=1.7pt](1.75,2)(0,1)(0.25,2.25)(2,3)(1.75,2)
\uput[d](1,0){$\mu(o)$} \uput[dr](2.5,0.5){$\mu(p_1)$}
\uput[r](3,1.5){$\mu(q_1)$} \uput[u](2,3){$\mu(r)$}
\uput[ul](0.25,2.25){$\mu(q_3)$} \uput[dl](0,1){$\mu(p_3)$}
\uput[dr](2.25,1.25){$\mu(p_2)$} \uput[ul](1.75,2){$\mu(q_2)$}
\end{pspicture}}

\qquad

\subfigure[]{
\begin{pspicture}(0,-0.2)(3,3.5)\footnotesize
\pspolygon[fillstyle=solid,fillcolor=lightgray,
linestyle=none](1,0)(2.5,0.5)(3,2.25)(2,3)(0.25,2.25)(0,1)
\psline(2,3)(3,2.25)(1.25,0.75)(2.5,1.35)(2,3)
\psline[linestyle=dotted,
dotsep=1.5pt,linewidth=1.7pt](2,3)(2.5,1.35)(0,1)(0.25,2.25)(2,3)
\psdots[dotsize=3pt](1,0)(2.5,0.5)(3,2.25)(2,3)(0.25,2.25)(0,1)
\psdots[dotsize=3pt](1.25,0.75)(2.5,1.35)
\uput[d](1,0){$\mu(o)$} \uput[dr](2.5,0.5){$\mu(p_1)$}
\uput[r](3,2.25){$\mu(q_1)$} \uput[u](2,3){$\mu(r)$}
\uput[ul](0.25,2.25){$\mu(q_3)$} \uput[dl](0,1){$\mu(p_3)$}
\uput[d](1.25,0.75){$\mu(p_2)$} \uput[d](2.5,1.35){$\mu(q_2)$}
\end{pspicture}}

\qquad

\subfigure[]{
\begin{pspicture}(0,-0.2)(3,3.5)\footnotesize
\pspolygon[fillstyle=solid,fillcolor=lightgray,
linestyle=none](1,0)(2.5,0.5)(3,1.5)(2,3)(0.25,2.25)(0,1)
\psdots[dotsize=3pt](1,0)(2.5,0.5)(3,1.5)(2,3)(0.25,2.25)(0,1)(1.75,1)(2.25,2)
\psline(2,3)(2.25,2)(1.75,1)(3,1.5)(2,3)
\psline[linestyle=dotted,
dotsep=1.5pt,linewidth=1.7pt](2.25,2)(0,1)(0.25,2.25)(2,3)(2.25,2)
\uput[d](1,0){$\mu(o)$} \uput[dr](2.5,0.5){$\mu(p_1)$}
\uput[r](3,1.5){$\mu(q_1)$} \uput[u](2,3){$\mu(r)$}
\uput[ul](0.25,2.25){$\mu(q_3)$} \uput[dl](0,1){$\mu(p_3)$}
\uput[d](1.75,1){$\mu(p_2)$} \uput[dr](2.05,2){$\mu(q_2)$}
\end{pspicture}}
}
\end{center}
\caption{\label{figure: proof of convex} Possible configurations of $\Gamma$ of type 
Table \ref{table: graph shape}.(h)}
\end{figure}
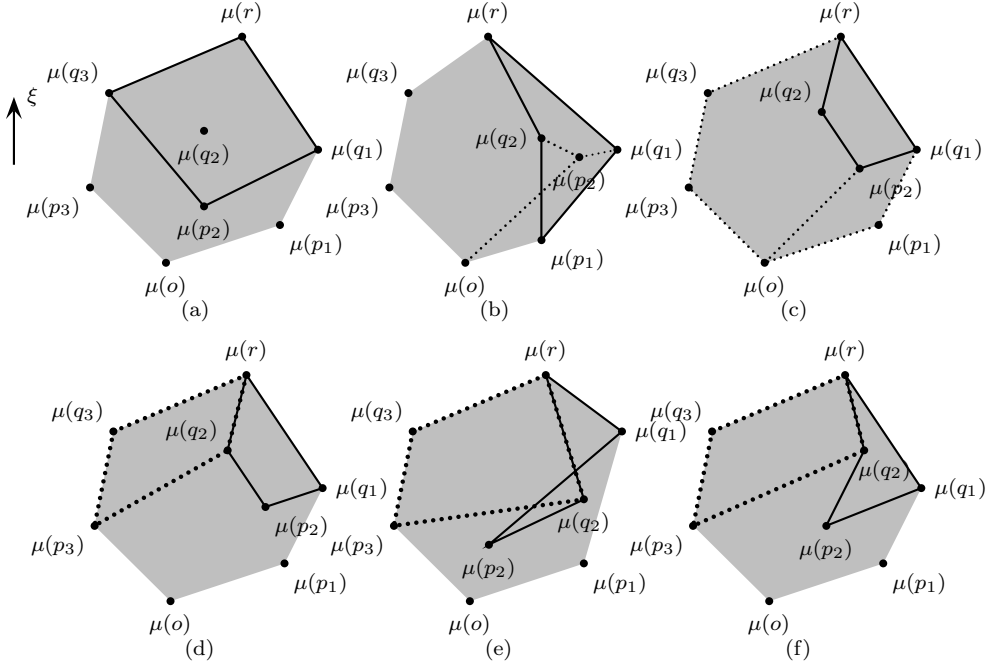

\subsection*{Case (ii): $\gamma_{p_1}$ contains $q_1$ and $q_2$}
First, $\gamma_{p_1}$ cannot be crossed since $(p_1, q_1)$ and $(q_1, r)$ 
are boundary edges. Suppose that $\gamma_{p_1}$ is
concave, see Figure \ref{figure: proof of convex}.(b).
Since $p_1$, $q_1$, and $r$ are boundary vertices of
$\gamma_{p_1}$, $q_2$ should be contained in the
interior of the convex hull $\Conv \{p_1, q_1,
r\}$ by Lemma \ref{lemma:
tetragon}. Then $q_2$ cannot be adjacent to $p_3$ by
Lemma \ref{lemma: interior}. Also, $q_2$ cannot be
adjacent to $o$ since $o$ is already adjacent to three
vertices $p_1$, $p_2$, and $p_3$. Thus $q_2$ is
adjacent to $p_2$ by the index increasing property.
Then $p_2$ should be in the interior of $\square
\gamma_{p_1}$ by Lemma \ref{lemma: interior} at $q_2$.
Moreover, by Lemma \ref{lemma: interior} again, $p_2$ cannot be adjacent to $q_3$.
Thus $p_2$ is adjacent to $q_1$ and this contradicts the three valency of $\Gamma$ 
at $p_3$. Therefore, $\gamma_{p_1}$ is convex. 

\subsection*{Case (iii): $\gamma_{p_2}$ contains $q_1$ and $q_2$}
Note that $p_3$ is adjacent to $q_2$ because $q_1$ is adjacent to $p_1$ and $p_2$, 
see Figure \ref{figure: proof of convex}.(c).
Suppose that $\gamma_{p_2}$ is crossed. Since the edge
$(q_1, r)$ is boundary, two line segments
$\overline{q_2 r}$ and
$\overline{p_2 q_1}$ should intersect by
Lemma \ref{lemma: tetragon}. Then this contradicts to
Lemma \ref{lemma_GKM_weight_pair_2} with respect to
the edge $(q_2, r)$, so $\gamma_{p_2}$ is not crossed, see Figure \ref{figure: proof of convex}.(e).

Next, suppose that $\gamma_{p_2}$ is concave.
By the index increasing property, $p_2$ should be below $q_1$ and $q_2$. Thus $q_2$ should be lying on the interior of $\Conv \{p_2, q_1, r\}$,
see Figure \ref{figure: proof of convex}.(f).
Then it contradicts Lemma \ref{lemma_GKM_weight_pair_2} with respect to the edge $(q_2, r)$.
Therefore, $\gamma_{p_2}$ is convex.

\subsection*{Case (iv): $\gamma_{p_3}$ contains $q_1$ and $q_2$}
Such case does not happen by the three valency at
$p_3$.
\end{proof}

\bigskip

\end{document}